\renewcommand{\le}{\leqslant}
\renewcommand{\ge}{\geqslant}
\def\b{\stackinset{c}{+0.8pt}{t}{-2pt}{\textit{\u{}}}{$b$}}
\newtheorem{theorem}{Theorem}[section]
\newtheorem{proposition}[theorem]{Proposition}
\newtheorem{lemma}[theorem]{Lemma}
\newtheorem{corollary}[theorem]{Corollary}
\theoremstyle{definition}
\newtheorem{definition}[theorem]{Definition}
\theoremstyle{remark}
\newtheorem{remark}[theorem]{Remark}
\numberwithin{equation}{section}
\DeclareMathOperator{\vol}{vol}
\DeclareMathOperator{\sys}{sys}
\newcommand*{\where}{\ \ifnum\currentgrouptype=16 \middle\fi|\ }
\renewcommand{\epsilon}{\varepsilon}
\renewcommand{\phi}{\varphi}
\renewcommand{\kappa}{\varkappa}
\renewcommand{\theta}{\vartheta}
\def\Z{{\mathbb Z}}
\def\R{{\mathbb R}}
\title{Systolic inequalities for the number of vertices}
\author{Sergey Avvakumov{$^\spadesuit$}}
\author{Alexey Balitskiy{$^\clubsuit$}}
\author{Alfredo Hubard{$^\diamondsuit$}}
\author{Roman Karasev{$^\heartsuit$}}
\thanks{{$^\spadesuit$} Supported by the European Research Council under the European Union's Seventh Framework Programme ERC Grant agreement ERC StG 716424 -- CASe}
\thanks{{$^\clubsuit$} This material is based in part upon work supported by the National Science Foundation under Grant No. DMS-1926686}
\thanks{{$^\diamondsuit$} Supported by ANR-17-CE40-0033 (SoS), ANR-17-CE40-0018 (CAAPS), ANR-19-CE40-0014 (Min-max) and ERC-339025 (GUDHI)}
\address{Sergey Avvakumov, Department of Mathematical Sciences, University of Copenhagen, Universitetspark 5, 2100 Copenhagen, Denmark}
\email{savvakumov@gmail.com}
\address{Alexey Balitskiy, Institute for Advanced Study, 1 Einstein Dr, Princeton, NJ 08540, USA
\newline \indent
Institute for Information Transmission Problems RAS, Bolshoy Karetny per. 19, Moscow, Russia 127994}
\email{abalitskiy@ias.edu}
\address{Alfredo Hubard, Universit\'e Gustave Eiffel, LIGM. UMR 8049, CNRS, ENPC, ESIEE, UPEM, F-77454, Marne-la-Vall\'ee, France.}
\email{alfredo.hubard@univ-eiffel.fr}
\address{Roman Karasev, Institute for Information Transmission Problems RAS, Bolshoy Karetny per. 19, Moscow, Russia 127994}
\email{r\_n\_karasev@mail.ru}
\urladdr{http://www.rkarasev.ru/en/}
\keywords{systolic inequality, triangulation}
\subjclass[2010]{51F30, 05E45}
\begin{document}

\begin{abstract}
Inspired by the classical Riemannian systolic inequality of Gromov we present a combinatorial analogue providing a lower bound on the number of vertices of a simplicial complex in terms of its edge-path systole. Similarly to the Riemannian case, where the inequality holds under a topological assumption of ``essentiality'', our proofs rely on a combinatorial analogue of that assumption. Under a stronger assumption, expressed in terms of cohomology cup-length, we improve our results quantitatively. We also illustrate our methods in the continuous setting, generalizing and improving quantitatively the Minkowski principle of Balacheff and Karam; a corollary of this result is the extension of the Guth--Nakamura cup-length systolic bound from manifolds to complexes.
\end{abstract}

\maketitle

\section{Introduction}

What is the smallest number of vertices in a simplicial complex triangulating a given topological space? Taking $\R P^n$ as an example, the exact minimum is known only for $n\le4$. Asymptotically there is a wide gap between the best known lower bound of $\frac{1}{2}(n+1)(n+2)$ from \cite[\S\,16]{arnoux-marin1991} and the recently discovered \cite{adiprasito2020} upper bound of $e^{C\sqrt{n}\log n}$, which is realized as the quotient of a centrally symmetric simplicial convex polytope. \\


For any simplicial complex, the length of the shortest non-contractible loop along the edges is at least $3$. Hence the question of vertex minimal triangulations is the end-case of a question in systolic geometry.\\

In this paper we address the smallest number of vertices of triangulations such that the shortest non contractible loop along the edges has length at least $s$. One of our main results, Theorem \ref{theorem:a-la-gromov} below provides a combinatorial analogue of Gromov's systolic inequality in which the volume of a Riemannian metric is substituted by the number of vertices of a simplicial complex. This theorem answers a question from \cite[Appendix~1]{cdv-h-dm2015} and is valid not only for the projective space, but much more generally, for any combinatorially $n$-essential complex.\\

Besides that, in Theorem \ref{theorem:nabutovsky} below we address the classical systolic inequality for Riemannian polyhedra, improving the results of \cite{guth2010,nakamura2013,balacheff2016,borghini2019}.

\subsection{Discrete formulation of systolic inequalities}

\emph{Systolic} (or \emph{isosystolic}) inequalities, first studied by L\"owner and Pu~\cite{pu1952} (see also \cite{ivanov2011} for a modern version of Pu's two-dimensional result), relate the volume of a closed manifold to the \emph{systole}, that is, the length of a shortest non-contractible curve. Gromov \cite{gromov1983} proved \emph{the} systolic inequality: there exists a constant $c_n$ such that  if $M^n$ is an \emph{essential} manifold, then for any Riemannian metric $g$ on $M$ the systole $\sys(g)$ of $(M,g)$ is bounded from above by $c_n \vol(g)^{1/n}$. Here the constant $c_n$ depends only on the dimension, and $\vol(g)$ is the Riemannian volume of $(M, g)$. The converse---every non-essential manifold admits a metric with large systole but small volume---is also true, as first observed by Babenko~\cite{babenko1993}.


\begin{definition}
The \emph{(edge-path) systole} $\sys X$ of a simplicial complex $X$ is the smallest integer such that any closed path along the edges of $X$ of edge-length less than $\sys X$ is null-homotopic. If every component of $X$ is simply connected, the systole is, by convention, $+\infty$.
\end{definition}

In \cite{cdv-h-dm2015} and \cite{kowalick2015} it was observed that there exists a constant $c_n'$ such that any simplicial complex $X$ triangulating an essential manifold has at least $c_n' \sys(X)^{n}$ facets (faces of dimension $n$). Theorem \ref{theorem:a-la-gromov} provides a condition on a simplical complex that implies the discrete systolic inequality in terms of the vertices (which in turn, implies the Riemannian version \cite{cdv-h-dm2015} and lower bounds for the number of faces of any given dimension
which in particular improve significantly the estimate on the constant $c_n'$). 

Unlike the estimates on the number of $n$-faces, lower bounds on the number of vertices (in Theorem \ref{theorem:a-la-gromov}) are not easy to derive directly from Riemannian systolic inequalities. Instead our proof adapts the approach of Larry Guth~\cite{guth2010} and Panos Papasoglu~\cite{papasoglu2019}, which is metaphorically referred to as ``the Schoen--Yau~\cite{schoen1979} minimal hypersurface method''. 

We remark that metric systolic inequalities have been successfully studied in spaces other than manifolds; see, e.g., \cite[Appendix B]{gromov1983} or \cite{liokumovich2019}. Here we consider arbitrary simplicial complexes, which is not new, but allows to make definitions and present proofs in a very simple way.

Now we get down to definitions, paralleling the Riemannian definitions in the combinatorial setting. For a subset $W\subset V(X)$ of vertices of a simplicial complex $X$, we denote by $\langle W\rangle$ the subcomplex induced by $W$, that is, the subcomplex of all faces $\sigma\in X$ such that the vertices of $\sigma$ are in $W$.

\begin{definition}
A subset $Y\subset V(X)$ is called \emph{inessential} if the natural map $\pi_1(C)\to \pi_1(D)$ is trivial for every connected component $C$ of $\langle Y\rangle$ and every connected component $D$ of $X$.
\end{definition}

\begin{definition}
\label{definition:combinatorially-n-essential}
A complex $X$ is called \emph{combinatorially $n$-essential} if its vertex set cannot be partitioned into $n$ inessential sets or fewer.
\end{definition}

Note that in our definition 
\begin{itemize}
\item
Combinatorially $n$-essential implies combinatorially $m$-essential whenever $m\le n$.
\item 
Combinatorially $1$-essential is equivalent to having non-contractible loops and having finite systole.
\item 
A space having no non-contractible loops may be considered combinatorially $0$-essential, but our results do not apply to this case.
\end{itemize} 

\begin{theorem}
\label{theorem:a-la-gromov}
Let $X$ be a combinatorially $n$-essential simplicial complex, $n\ge 1$. Then the number of vertices of $X$ is at least
\[
\binom{n + \left\lfloor\frac{\sys X}{2}\right\rfloor - 1}{n-1} + 2 \binom{n + \left\lfloor\frac{\sys X}{2}\right\rfloor - 1}{n} - 1 \ge \binom{n + \left\lfloor\frac{\sys X}{2} \right\rfloor}{n} \ge \frac{1}{n!} \left\lceil\frac{\sys X}{2} \right\rceil^n.
\]
\end{theorem}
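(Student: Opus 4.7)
The plan is to adapt Gromov's ball-covering strategy to this combinatorial setting. Set $s = \sys X$ and $k = \lfloor s/2 \rfloor$. The starting observation is that combinatorial balls of sufficiently small radius are inessential: for any vertex $v$ and any $r$ with $2r + 1 \le s$, the induced subcomplex $\langle B_r(v) \rangle$ is inessential, because any edge-loop in it decomposes into triangular loops through $v$, each of edge-length at most $2r + 1 \le s$, hence null-homotopic in $X$ by the definition of the systole. Since inessentiality is inherited by subsets, any cover of $V(X)$ by such balls refines to a partition into inessential sets.

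The previous observation immediately yields the following reformulation: if there exist $n$ vertices $v_1, \dots, v_n$ whose balls $B_r(v_i)$ (with $r$ as above) cover $V(X)$, then $X$ is not combinatorially $n$-essential. Contrapositively, for an $n$-essential complex, no $n$ such balls cover $V(X)$. I would then try to translate this ``non-coverability'' into an explicit vertex count via a volume-growth / peeling argument. Fix a vertex $v$ and consider the concentric shells $S_i(v) = B_i(v) \setminus B_{i-1}(v)$. The plan is to establish by induction on $n$ that, in a combinatorially $n$-essential complex, $|S_i(v)| \ge \binom{n+i-1}{n-1}$ for $i \le k$, possibly by invoking a combinatorial analogue of the assertion that the boundary of an essential ball carries $(n-1)$-essentiality. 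Summing over $i = 0, 1, \dots, k$ telescopes to $|V(X)| \ge |B_k(v)| \ge \binom{n+k}{n}$, which matches the middle bound stated in the theorem.

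To refine $\binom{n+k}{n}$ to the sharper $\binom{n+k-1}{n-1} + 2\binom{n+k-1}{n}$, I would rewrite the target as $\binom{n+k}{n} + \binom{n+k-1}{n}$ via Pascal's rule. This suggests that on top of $B_k(v)$, there is an ``antipodal'' region of size at least $\binom{n+k-1}{n}$ disjoint from the open ball $B_{k-1}(v)$ — morally, a second ball of radius $k-1$ around some vertex $v'$ realizing the maximum distance from $v$. The main obstacle, and the core technical hurdle I expect, is to formalize this antipodal region purely combinatorially: one needs an analogue of the statement that deleting an inessential ball from an $n$-essential complex leaves behind structure that still forces a second growing cluster, in the spirit of Gromov's filling-radius or Guth--Papasoglu minimal-hypersurface arguments. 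Once such an ``essentiality-after-removal'' lemma is in place, inductively applying the volume-growth bound to both clusters should yield the stated sharp estimate, with the weaker bounds $\binom{n + k}{n}$ and $\frac{1}{n!} \lceil s/2 \rceil^n$ following by elementary binomial manipulations.
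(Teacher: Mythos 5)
Your outline correctly identifies the big picture (balls are inessential, volume growth by induction on essentiality, ideas of Guth--Papasoglu), but there are two concrete problems and one genuine missing ingredient.

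\textbf{An off-by-one in the inessentiality threshold.} You claim $\langle B_r(v)\rangle$ is inessential whenever $2r+1 \le \sys X$, by decomposing a loop into triangles of length $\le 2r+1$. But the definition of $\sys X$ only guarantees null-homotopy for loops of length \emph{strictly less} than $\sys X$, so the correct threshold is $2r+1 < \sys X$, i.e.\ $r \le \lfloor \sys X/2\rfloor - 1$. The $5$-cycle (systole $5$) is a counterexample to your version: $B_2(v)$ already contains the full non-contractible loop. The paper (Proposition~\ref{claim:systole-inessential-triviality-relation}) uses $r = \lfloor \sys X/2\rfloor - 1$ as the homotopy triviality radius, and obtains the bound on the ball of radius $r+1 = \lfloor \sys X/2\rfloor$ not from inessentiality of that ball but from an extra step in the recursion (the surgery estimate still applies to the last shell).

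\textbf{The core inductive lemma is gestured at but not the right one.} You write that you would ``invoke a combinatorial analogue of the assertion that the boundary of an essential ball carries $(n-1)$-essentiality.'' Metric spheres $\langle S(x,i)\rangle$ in an $n$-essential complex need not be $(n-1)$-essential, and no such statement is used in the paper. The actual mechanism (Theorem~\ref{theorem:base-case-gen}) is the Schoen--Yau/Guth/Papasoglu minimal-hypersurface step done globally: choose a \emph{smallest} vertex set $Z$ whose complement $Y = V(X)\setminus Z$ is $\pi$-inessential. Then $\langle Z\rangle$ is automatically $(n-1)$-essential (else $X$ would split into $n$ inessential pieces). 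Apply the induction hypothesis to $\langle Z\rangle$ to find $x \in Z$ with good ball growth inside $Z$. Then, for each $i\le r$, perform a \emph{surgery}: replace $B(x,i)\cap Z$ by $S(x,i+1)\setminus Z$; the result still separates, so by minimality $|S(x,i+1)\setminus Z| \ge |B(x,i)\cap Z|$. Summing these shell inequalities produces $|B(x,i)| \ge \sum_{j\le i} b_{n-1}(j) = b_n(i)$. This global minimal-separator-plus-surgery argument is what makes the induction go through; without it your proposal has no proof of the shell bound $|S_i(v)|\ge \binom{n+i-1}{n-1}$, which in any case is not literally how the recursion works (the paper bounds $|B(x,i)|$, not individual shells, and the bound depends on the specially chosen $x\in Z$).

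\textbf{The ``antipodal region'' route to the sharp constant is not the paper's approach and is not obviously repairable.} You rewrite the bound via Pascal as $\binom{n+k}{n} + \binom{n+k-1}{n}$ and suggest finding a second cluster around a vertex $v'$ far from $v$. But nothing forces the diameter of $X$ to exceed $2k-1$, so $B_{k-1}(v')$ may overlap $B_k(v)$ entirely. The paper instead iterates the removal argument: the ball $B_n$ found above is inessential (radius $\le r$), so $\langle V(X)\setminus B_n\rangle$ is $(n-1)$-essential; find a second ball there, etc. Summing the resulting $b_k(r)$ over $k$ telescopes via the hockey-stick identity to $2\binom{r+n}{n}+\binom{r+n}{n-1}$. (In fact the single-ball bound $b_n(r+1)\ge 2\binom{r+n}{n}+\binom{r+n}{n-1}-1$ already comes within one vertex of the claimed constant.)

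In short: the philosophy is right, but the crucial step you would need to prove --- $(n-1)$-essentiality of a minimal separating set together with the surgery estimate on shells --- is not identified, and the proposed substitute (``boundary of a ball is essential'') is false.
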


A topological space is said to be \emph{$n$-essential} if the classifying map $f : X\to K(\pi_1(X), 1)$ cannot be deformed to the $(n-1)$-skeleton of $K(\pi_1(X), 1)$, see \cite[Appendix B]{gromov1983}.  In an appendix to this paper we show that any triangulation of an $n$-essential space is combinatorially $n$-essential. The converse is not true, Remark \ref{remark:complete-graph} presents a combinatorially $n$-essential complex that is not $n$-essential as a topological space. See the appendix for more details on the comparison of these definitions.

\subsection{Size of metric balls}

The proof of theorem \ref{theorem:a-la-gromov}  will be based on considering certain metric balls and relating their properties to the systole of the simplicial complex $X$.  For $x\in V(X)$ and an integer $i$, let $B(x,i)\subset V(X)$ denote the set of vertices of a simplicial complex $X$ whose edge-distance to $x$ is at most $i$. Similarly, let $S(x,i)\subset V(X)$ denote the set of vertices of $X$ whose edge-distance to $x$ is exactly $i$.


\begin{proposition}
\label{claim:systole-inessential-triviality-relation}
Let $X$ be a simplicial complex with finite systole. Set $r := \left\lfloor\frac{\sys X}{2} \right\rfloor-1$. Then every metric ball $B(x,r) \subseteq V(X)$ is inessential, and $r$ is the maximum integer with this property.
\end{proposition}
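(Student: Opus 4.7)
\emph{Proof plan.} The claim splits into two assertions: (a) every ball $B(x,r)$ is inessential; (b) for some $x$, the ball $B(x,r+1)$ is not inessential.

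For (a), I fix a vertex $x$ and an arbitrary closed edge-path $\gamma = v_0 v_1 \cdots v_k v_0$ lying in a connected component $C$ of $\langle B(x,r) \rangle$; I want to show $\gamma$ is null-homotopic in $X$. The plan is to decompose $\gamma$, up to based homotopy in $X$, into short ``triangular'' loops through $x$ and invoke the systole bound on each. Concretely, for each $i$ I would pick an edge-geodesic $p_i$ from $x$ to $v_i$ in $X$, necessarily of length at most $r$; the $p_i$ need not lie in $\langle B(x,r)\rangle$, but the homotopies only have to take place in $X$. Set
\[
T_i := p_i \cdot (v_i v_{i+1}) \cdot p_{i+1}^{-1}.
\]
Inserting the trivial loops $p_{i+1}^{-1} \cdot p_{i+1}$ between consecutive edges of $\gamma$ expresses it, after conjugation by $p_0$, as the based concatenation $T_0 \cdot T_1 \cdots T_k$. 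Each $T_i$ is a closed edge-path of length at most $2r + 1 = 2 \lfloor \sys X / 2 \rfloor - 1 < \sys X$, hence null-homotopic by the definition of the systole; so $\gamma$ is null-homotopic in $X$ and the map $\pi_1(C) \to \pi_1(D)$ is trivial for the component $D \supseteq C$ of $X$.

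For (b), using the finiteness of $\sys X$, I would pick a non-contractible edge-loop of minimal length $s := \sys X$, say $\gamma = v_0 v_1 \cdots v_{s-1} v_0$, and set $x := v_0$. Each vertex $v_i$ is reachable from $v_0$ along $\gamma$ in $\min(i, s-i) \le \lfloor s/2 \rfloor = r + 1$ edges, so every vertex of $\gamma$ lies in $B(x, r+1)$. Consequently $\gamma$ is an edge-loop in a single connected component of $\langle B(x, r+1) \rangle$ that is non-contractible in $X$, witnessing the failure of inessentiality for $B(x, r+1)$.

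The only mild obstacle I anticipate is the based-loop bookkeeping in (a) that turns the iterated insertion of $p_{i+1}^{-1} p_{i+1}$ into an honest concatenation of the $T_i$'s; this is a routine van Kampen-style manipulation. Everything else reduces to the arithmetic identity $2 \lfloor \sys X / 2 \rfloor \le \sys X$, used in (a) to keep the triangular loops strictly below the systole and in (b) to keep $\gamma$ inside the $(r+1)$-ball.
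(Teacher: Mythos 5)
Your proof is correct, and part (b) coincides with the paper's argument: anchoring $x$ on a shortest non-contractible edge-loop of length $s$ and observing that every vertex of the loop lies within distance $\lfloor s/2\rfloor = r+1$ of $x$.

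Part (a), however, takes a genuinely different route. The paper passes to the universal cover $\widetilde X$, observes that vertices in the same $\pi_1(X)$-orbit are at edge-distance $\ge \sys X$, and deduces that the covering restricted to $\langle B(x,r)\rangle$ is trivial when $2r < \sys X - 1$. You instead stay downstairs and decompose an arbitrary loop $\gamma$ in the ball into ``triangular'' based loops $T_i = p_i \cdot (v_iv_{i+1}) \cdot p_{i+1}^{-1}$ of length $\le 2r+1 < \sys X$, each null-homotopic by the definition of the systole, so that $\gamma \simeq p_0^{-1}(T_0\cdots T_k)p_0$ is null-homotopic. This is exactly the discrete analogue of the paper's Lemma~\ref{lemma:curve-factoring} (the ``curve factoring'' argument used there for Riemannian polyhedra), so your version has the aesthetic advantage of unifying the discrete and continuous cases by a single elementary mechanism, whereas the paper treats them by different means (covering-space triviality here, curve factoring in Section~\ref{section:riemannian-systole}). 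The covering-space formulation does pay off in the paper because it is the language in which Definition~\ref{definition:n-ess-cover} and the induction in Theorem~\ref{theorem:base-case-gen} are phrased, so Proposition~\ref{claim:systole-inessential-triviality-relation} slots directly into that framework; but as a self-contained proof of the proposition your triangular decomposition is complete and arguably more transparent.
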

\begin{definition}
We call $r$ in the previous proposition the \emph{homotopy triviality radius} of $X$.
\end{definition}
\begin{proof}[Proof of Proposition \ref{claim:systole-inessential-triviality-relation}]
Consider the universal covering map $\widetilde X\to X$. By the definition of systole, the edge-distance between any two vertices of the same $\pi_1(X)$-orbit is at least $\sys X$. It follows that for all $r$ such that $2r < \sys X -1$ the restriction of the covering map $\widetilde X\to X$ to the (preimage of the) metric ball $\langle B(x,r)\rangle$ is a trivial cover. This means that this ball is inessential.

On the other hand, if $x$ is chosen on a non-contractible edge-loop of length $\sys X$, then $\langle B(x,r)\rangle$, for $r := \left\lfloor\frac{\sys X}{2} \right\rfloor$, contains this loop.
\end{proof}

The estimate on the number of vertices of $X$ follows from the estimate of the number of vertices in a carefully chosen ball in $X$ of radius $r+1$, given in the following theorem:

\begin{theorem}
\label{theorem:base-case}
Let $X$ be a combinatorially $n$-essential simplicial complex, $n\ge 1$, and $r$ be the homotopy triviality radius of $X$. Then there exists a vertex $x\in X$ such that for any $i=0,\ldots,r+1$  the number of vertices in $B(x,i)$ is at least $b_n(i)$, where positive integers $b_n(i)$ satisfy the following recursive relations:
\begin{itemize}
\item $b_1(i) = 2i+1$ for any $i=0,\ldots,r$, and $b_1(r+1)= 2r+2$;
\item $b_n(i) = \sum_{0\le j\le i}b_{n-1}(j)$ for any $i=0,\ldots,r+1$.
\end{itemize}
In particular, $b_n(i) \ge 2\binom{i + n -1}{n} + \binom{i + n-1}{n-1}$ for any $i=0,\ldots,r$, and $b_n(r+1)\ge 2\binom{r + n}{n} + \binom{r + n}{n-1} - 1$.
\end{theorem}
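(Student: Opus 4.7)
The plan is to proceed by induction on $n$.

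\textbf{Base case $n=1$.} Combinatorial $1$-essentiality of $X$ means some connected component has non-trivial fundamental group, so there is a non-contractible edge-loop $\gamma$ of length $L=\sys X$. Choose $x\in V(\gamma)$. A shortcut argument shows that for any vertex $v\in V(\gamma)$ at cyclic distance $k\le \lfloor L/2\rfloor$ from $x$, we have $d_X(x,v)=k$: any strictly shorter $X$-path from $x$ to $v$ could be spliced with a piece of $\gamma$ to produce a non-contractible edge-loop of length $<L$, contradicting the minimality of $\gamma$. Counting the $2i+1$ vertices of $\gamma$ at cyclic distance $\le i$ from $x$ gives $|B(x,i)|\ge 2i+1$ for $i\le r=\lfloor L/2\rfloor-1$; at $i=r+1$ the whole cycle lies in $B(x,r+1)$, so $|B(x,r+1)|\ge L\ge 2r+2$.

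\textbf{Inductive step $n\ge 2$.} Assuming the theorem for $n-1$, the recursion $b_n(i)=b_n(i-1)+b_{n-1}(i)$ reduces the goal to finding a single vertex $x$ satisfying $|S(x,j)|\ge b_{n-1}(j)$ for every $j\in\{1,\ldots,r+1\}$ simultaneously. The natural approach is to locate $x$ for which each sphere sub-complex $\langle S(x,j)\rangle$ contains a combinatorially $(n-1)$-essential sub-complex of homotopy triviality radius at least $j-1$; the inductive hypothesis applied to each such sub-complex then supplies $b_{n-1}(j)$ vertices inside $S(x,j)$. Summing over $j$ yields $|B(x,i)|\ge b_n(i)$, and the binomial lower bounds for $b_n$ follow by induction on $n$ from the recurrence via the hockey-stick identity $\sum_{j=0}^{i}\binom{j+m}{m}=\binom{i+m+1}{m+1}$.

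\textbf{Main obstacle.} The technical core is producing such a good vertex $x$. I would argue by contradiction: if no $x$ works, then for each $x$ there is a radius $j_x\in\{1,\ldots,r+1\}$ at which the sphere $\langle S(x,j_x)\rangle$ contains no $(n-1)$-essential sub-complex of triviality radius $\ge j_x-1$, so $S(x,j_x)$ admits a partition into $n-1$ subsets inessential in $\langle S(x,j_x)\rangle$, hence in $X$. Combined with the inessentiality of the inner ball $B(x,j_x-1)$ provided by Proposition~\ref{claim:systole-inessential-triviality-relation} (valid since $j_x-1\le r$), these local partitions must be assembled into a single global partition of $V(X)$ into $n$ inessential subsets, contradicting the $n$-essentiality of $X$. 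The stitching of these locally defined partitions into a consistent global one is the central combinatorial difficulty.
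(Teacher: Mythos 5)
Your base case $n=1$ is fine and essentially matches the paper's argument; your reduction of the inductive step to showing $|S(x,j)|\ge b_{n-1}(j)$ for all $j=1,\ldots,r+1$ is a valid (if slightly stronger than necessary) reformulation. But the mechanism you propose for producing such an $x$ diverges from the paper's and contains a genuine gap. The paper never looks for $(n-1)$-essential sub-complexes inside spheres. It first generalizes the theorem to a fixed covering $\pi:\widetilde X\to X$ and ``$\pi$-inessential'' subsets (Theorem~\ref{theorem:base-case-gen}), which is what makes the induction go through: restricting $\pi$ to a sub-complex $\langle Z\rangle$ preserves the notion of essentiality, whereas the sub-complex's intrinsic fundamental group does not. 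It then picks a \emph{smallest} vertex set $Z$ whose complement is $\pi$-inessential; by minimality, $\langle Z\rangle$ is combinatorially $(n-1)$-essential for the restricted covering, so induction gives a center $x\in Z$ with $|B(x,i)\cap Z|\ge b_{n-1}(i)$. The key step is a surgery: replacing $Z$ by $Z'=Z\cup S(x,i+1)\setminus B(x,i)$ keeps the complement $\pi$-inessential, and minimality of $|Z|$ forces $|S(x,i+1)\setminus Z|\ge |B(x,i)\cap Z|$. Summing the telescoping estimate $|B(x,i)|=|B(x,i)\cap Z|+\sum_{j\le i}|S(x,j)\setminus Z|\ge\sum_{j\le i}|B(x,j)\cap Z|\ge b_n(i)$ finishes the step, with no induction on spheres at all.

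Your contradiction argument, by contrast, is incomplete in a way that is not just ``a difficulty'': for a fixed $x$ it produces a partition of only $B(x,j_x-1)\cup S(x,j_x)$ into $n$ inessential pieces, saying nothing about $V(X)\setminus B(x,j_x)$, and varying $x$ gives partitions supported on different regions with no evident compatibility. There is no visible route to assembling these into a single partition of $V(X)$, so the ``stitching'' step you name is a hole in the proof rather than a detail. Two further issues: (a) you apply the inductive hypothesis to sub-complexes of spheres without fixing the covering, so ``$(n-1)$-essential'' there would refer to the wrong fundamental group; (b) even granting everything, a sub-complex of triviality radius exactly $j-1$ only yields $b_{n-1}(j)-1$ vertices in $S(x,j)$ because of the ``$-1$'' correction at radius $r+1$, so the per-sphere count is off by one. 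The paper's surgery/minimality argument sidesteps all of this.
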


These results are proved in Section~\ref{section:essential-systole}.

\subsection{Improvements under additional assumption of cohomology cup-length}

We improve the bounds of Theorem~\ref{theorem:base-case} assuming additionally that the property of being essential is implied by a long nonzero product in cohomology. From here on we denote by $\Z_2$ the ring of residues modulo $2$.

\begin{definition}
\label{definition:cup-essential}
We call a complex \emph{$n$-cup-essential} (over $\Z$ or $\Z_2$) if it admits $n$ (not necessarily distinct) cohomology classes in degree $1$ (with coefficients $\Z$ or $\Z_2$, respectively) whose cup-product is non-zero.
\end{definition}

A cup-essential complex is essential (with the same $n$), as it follows from the Lusternik--Schnirelmann-type of argument (see Lemma~\ref{lemma:MW}). The torus $(S^1)^n$ is $n$-cup-essential both over $\Z$ and $\Z_2$. The real projective space $\R P^n$ is $n$-cup-essential over $\Z_2$.

\begin{definition}
We say that $X$ has \emph{homology triviality radius $r$ with respect to the $1$-cohomology classes} $\xi_i$ if the restriction of every $\xi_i$ to every metric ball $\langle B(x,r) \rangle \subseteq X$ is zero, and $r$ is the maximum integer with this property.
\end{definition}

Note that the existence of non-trivial degree $1$ cohomology classes implies the existence of non-contractible loops and allows to define the finite systole. 

\begin{theorem}
\label{theorem:cup-case}
Let $X$ be a simplicial complex that is $n$-cup-essential over $\Z$ or $\Z_2$, as witnessed by the degree $1$ cohomology classes $\xi_1,\ldots,\xi_n$, $\xi_1\smile\dots\smile\xi_n \neq 0$. Let $r$ be the corresponding homology triviality radius.

Then there is a vertex $x\in X$ such that for any $i=0,\ldots,r+1$  the number of vertices in $B(x,i)$ is at least $\b_n(i)$, where positive integers $\b_n(i)$ satisfy the following recursive relations:
\begin{itemize}
\item $\b_1(i) = 2i+1$ for any $i=0,\ldots,r$;
\item $\b_n(i) = \b_{n-1}(i) + 2\sum_{0\le j\le i-1}\b_{n-1}(j)$ for any $i=0,\ldots,r$.
\end{itemize}
\end{theorem}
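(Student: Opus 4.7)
My plan is to induct on $n$, paralleling the proof of Theorem~\ref{theorem:base-case} but doubling the sphere bound at every step via the cup-product hypothesis. From the recursion,
\[
\b_n(i)-\b_n(i-1) = \b_{n-1}(i)+\b_{n-1}(i-1),
\]
so the target ball bound is equivalent to the sphere estimate $|S(x,i)|\ge \b_{n-1}(i)+\b_{n-1}(i-1)$ for $1\le i \le r$ (with $|S(x,0)|=1$).

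For the base case $n=1$, I would take $\gamma$ to be a shortest edge-loop with $\langle\xi_1,[\gamma]\rangle\ne 0$. Any edge-loop of length at most $2r+1$ fits inside some ball $\langle B(y,r)\rangle$, on which $\xi_1$ vanishes, so $|\gamma|\ge 2r+2$; by minimality $\gamma$ is embedded (a repeated vertex would split $\gamma$ into two shorter loops summing to $[\gamma]$, at least one of which is still $\xi_1$-nontrivial, contradicting minimality). Any $x\in\gamma$ then yields $|B(x,i)|\ge 2i+1=\b_1(i)$ for $i\le r$.

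For the inductive step, let $\xi_1,\ldots,\xi_n$ witness $n$-cup-essentiality with radius $r$, and assume the theorem for $n-1$. I would choose $x$ on a shortest edge-loop $\gamma$ with $\langle\xi_n,[\gamma]\rangle\ne 0$; as before $|\gamma|\ge 2r+2$ and $\gamma$ is embedded. The loop $\gamma$ serves as an ``axis'' through $x$. Fix a cocycle $\alpha_n$ representing $\xi_n$, supported near $\gamma$; since $\xi_n|_{\langle B(x,r)\rangle}=0$, there is a $0$-cochain $\sigma$ on $B(x,r)$ with $\alpha_n|_{\langle B(x,r)\rangle}=\delta\sigma$, unique up to a constant, which I pin down by $\sigma(x)=0$. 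The sign of $\sigma$ then splits each sphere $S(x,i)$ into two hemispheres $S^+(x,i)$ and $S^-(x,i)$. A Mayer--Vietoris argument on the decomposition $\bigl(\langle B(x,i)\rangle,\langle V(X)\setminus B(x,i-1)\rangle\bigr)$, combined with $\xi_1\smile\cdots\smile\xi_n\ne 0$, should show that one hemisphere together with its inner filling inherits an $(n-1)$-cup-essential structure via $\xi_1,\ldots,\xi_{n-1}$ with homology triviality radius $\ge i$, while the other is $(n-1)$-cup-essential of radius $\ge i-1$. Applying the inductive hypothesis to each hemisphere gives $|S^+(x,i)|\ge \b_{n-1}(i)$ and $|S^-(x,i)|\ge \b_{n-1}(i-1)$, whose sum is exactly the required sphere bound.

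The main obstacle will be to make this hemispherical decomposition rigorous. Well-definedness of $\sigma$ on $B(x,r)$ reduces to path-independence of the signed $\alpha_n$-count along edge paths, which follows from the vanishing of $\xi_n$ on balls of radius $r$. The more delicate step is verifying the inheritance of $(n-1)$-cup-essentiality by each hemisphere with the stated radius: via the Mayer--Vietoris connecting homomorphism and the Leibniz rule for cup products, the non-vanishing of $\xi_1\smile\cdots\smile\xi_n$ should translate into non-vanishing of $\xi_1\smile\cdots\smile\xi_{n-1}$ on a neighbourhood of the sphere, with the two hemispheres picking up this structure asymmetrically (one on an ``inner'' side of radius $i-1$, one on an ``outer'' side of radius $i$). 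This asymmetry is precisely the source of the doubling $\b_{n-1}(i)+\b_{n-1}(i-1)$ that improves on the single $b_{n-1}(i)$ appearing in Theorem~\ref{theorem:base-case}.
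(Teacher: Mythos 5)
There is a genuine gap. The paper's proof has a specific two-step structure that your proposal does not reproduce, and the replacement you sketch cannot work as stated. In the paper one first chooses a \emph{minimal vertex set $Z$ separating the covering} ($Z$ separates $-\infty$ from $+\infty$ in the infinite cyclic cover over $\Z$, or dually, $V(X)\setminus Z$ carries a trivial double cover over $\Z_2$); Lemma~\ref{lemma:MW} (via a transfer/quotient trick over $\Z$) then shows that $\xi_1\smile\dots\smile\xi_{n-1}$ restricts nontrivially to $\langle Z\rangle$, and the inductive hypothesis is applied \emph{once, to $Z$}, producing the point $x\in Z$. The hemispheres $S^\pm(x,i+1)$ are defined by connectivity to $\pm\infty$ (resp.\ by matching lifts in the double cover), and the sphere bounds $|S^\pm(x,i+1)|\ge|B(x,i)\cap Z|$ come from the \emph{minimality of $Z$}: replacing $B(x,i)\cap Z$ by $S^+$ (or $S^-$) alone still separates, so it cannot be smaller. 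This minimality argument is where the entire quantitative content lives.

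Your proposal has no separating set, no minimizer, and hence no mechanism to extract the needed inequalities. Picking $x$ on a shortest $\xi_n$-nontrivial loop does not give you control over the sub-ball counts $|B(x,i)\cap(\text{anything})|$; the paper gets that control precisely because $x$ is produced by the inductive hypothesis applied to $Z$. Moreover, the step ``applying the inductive hypothesis to each hemisphere gives $|S^+(x,i)|\ge\b_{n-1}(i)$'' does not follow from the statement you are proving: the theorem gives, for an $(n-1)$-cup-essential complex, a vertex with good \emph{ball} estimates, which bounds the total vertex count only via the radius, and it bounds the vertex count of the hemisphere \emph{together with its inner filling} rather than the hemisphere $S^+(x,i)$ alone. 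Finally, the sign of the $0$-cochain $\sigma$ primitivizing $\alpha_n$ on $B(x,r)$ is not a robust invariant (it depends on $x$, on the representative $\alpha_n$, and on the metric ball boundary behaviour), and the Mayer--Vietoris/Leibniz step you flag as ``the main obstacle'' is indeed the obstacle: you would need to produce, for \emph{each} sphere $S(x,i)$, an $(n-1)$-cup-essential subcomplex supported on the hemisphere with a specified triviality radius, and there is no visible way to do that without something playing the role of the minimal separating set. The correct route is to choose $Z$ minimal first, apply induction to $Z$, and then use the exchange argument on $Z$; what you wrote is not a variant of that argument but a different and unclosed strategy.
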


In the case of $\Z_2$ coefficients, this theorem is basically a discrete analogue of \cite[Theorem~2.3]{nakamura2013} with the improvement that $X$ is not required to be a manifold. In the case of $\Z$ coefficients, to our knowledge, neither discrete nor continuous version was known before, and we discuss the corresponding continuous statement below. 

\begin{remark}
\label{remark:generating}
Unraveling the recursion we obtain the generating function:
\[
\sum_{n,i} \b_n(i) u^n v^i = \sum_n \frac{u^n (1+v)^n }{(1-v)^{n+1}} = \frac{1}{1-v}\frac{1}{1 - u\frac{1+v}{1-v}} = 
\frac{1}{1-v-u -uv}.
\]
From the first identity in this formula and the expansion $\frac{1}{(1+v)^{n+1}} = \sum_i \binom{i+n}{n} v^i$ one obtains an estimate
\[
2^n \binom{i}{n} \le \b_n(i) \le 2^n \binom{i+n}{n}.
\]
For example, for a fixed $n$ and $i\to\infty$ we obtain
\[
\b(n,i) = 2^n \frac{i^n}{n!} (1 + o(1)).
\]
\end{remark}

For the total number of points in a cup-essential manifold we have:

\begin{theorem}
\label{theorem:a-la-gromov-cup}
Let $X$ be a simplicial complex that is $n$-cup-essential over $\Z$ or $\Z_2$. Then the number of vertices of $X$ is at least
\[
\sum_{k=0}^n \b_k\left(\left\lfloor\frac{\sys X}{2} \right\rfloor-1\right) \ge 2^n \binom{\left\lfloor\frac{\sys X}{2} \right\rfloor-1}{n}.
\]
\end{theorem}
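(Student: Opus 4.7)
The plan is to combine Theorem~\ref{theorem:cup-case} with Proposition~\ref{claim:systole-inessential-triviality-relation} to turn the single-ball count of the former into a global vertex estimate. Setting $r := \lfloor\sys X/2\rfloor - 1$, the first step is to show that $r$ is at most the homology triviality radius of $X$ with respect to the witnessing classes $\xi_1,\ldots,\xi_n$. This is a routine observation: a $1$-cohomology class on $X$ with coefficients in an abelian group $G$ is equivalent to a homomorphism $\pi_1(X)\to G$, so the restriction of any $\xi_i$ to a subcomplex $\langle C\rangle$ vanishes as soon as the induced map $\pi_1(C)\to\pi_1(X)$ is trivial; but this is precisely what Proposition~\ref{claim:systole-inessential-triviality-relation} guarantees for every ball of radius $r$.

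Applying Theorem~\ref{theorem:cup-case} with this $r$ then produces a vertex $x_0\in X$ whose ball satisfies $|B(x_0,r)|\ge\b_n(r)$, which immediately yields $|V(X)|\ge\b_n(\lfloor\sys X/2\rfloor - 1)$ and, by Remark~\ref{remark:generating}, $|V(X)|\ge 2^n\binom{\lfloor\sys X/2\rfloor - 1}{n}$.

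To reach the sharper bound actually stated---with a factor of $n+1$ on the left and an extra $+1$ inside the binomial on the right---I would exploit the cup-length hypothesis through the Lusternik--Schnirelmann-style argument underlying Lemma~\ref{lemma:MW}. Since $X$ is $n$-cup-essential, it cannot be covered by fewer than $n+1$ inessential subcomplexes; in particular any covering of $X$ by inessential balls of radius $r$ must use at least $n+1$ balls. From such a minimal covering I would extract $n+1$ centers $x_0, x_1, \ldots, x_n$ to each of which Theorem~\ref{theorem:cup-case} (or an appropriate relative version) applies, and argue that the resulting ball contributions are essentially disjoint, so their sizes add to give the claimed sum of $n+1$ copies of $\b_n(\lfloor\sys X/2\rfloor - 1)$. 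The final inequality then reduces to an elementary binomial identity such as $(n+1)\binom{r}{n}\ge\binom{r+1}{n}$, which is valid for $r\ge n$.

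The main obstacle I anticipate is precisely this last step: making the $n+1$ ball contributions genuinely additive, i.e., disjoint or overlapping only in a controlled way, without losing a constant factor. If the direct iterated-extraction argument proves too lossy, a natural workaround is to peel one more layer off the inductive proof of Theorem~\ref{theorem:cup-case} itself---mirroring how Theorem~\ref{theorem:base-case} extends the estimate from radius $r$ to radius $r+1$---so as to obtain directly $|B(x_0,r+1)|\ge\b_n(r+1)\ge 2^n\binom{\lfloor\sys X/2\rfloor}{n}$, from which the main inequality follows immediately.
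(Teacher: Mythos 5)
Your first two paragraphs are sound and match the paper: set $r = \lfloor\sys X/2\rfloor - 1$, observe (via the fact that $r$-balls are $\pi_1$-inessential) that the homology triviality radius is at least $r$, and invoke Theorem~\ref{theorem:cup-case} to get one ball of size $\ge \b_n(r)$. That step is correct.

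The gap is exactly where you flagged it. Your main idea---extract $n+1$ centers from a minimal cover of $X$ by inessential balls---has no mechanism to make the contributions disjoint: the balls in an open cover of $X$ necessarily overlap, and the Lusternik--Schnirelmann count of $n+1$ says nothing about overlap. Your fallback---pushing to radius $r+1$ to get $\b_n(r+1) \ge 2^n\binom{\lfloor\sys X/2\rfloor}{n}$---also does not go through: the statement and proof of Theorem~\ref{theorem:cup-case} only establish the recursive bound for radii $i\le r$ (the doubling $|S(x,i+1)\setminus Z| \ge 2|B(x,i)\cap Z|$ uses that $B(x,i)$ lifts to two disjoint copies, which requires $i\le r$), and in any case a single ball of radius $r+1$ need not exhaust $V(X)$.

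The paper's route is a \emph{sequential} peeling argument, not a covering argument. Having found a ball $B_n$ with $\ge\b_n(r)$ vertices, one observes that $\xi_n$ restricts trivially to $\langle B_n\rangle$, so Lemma~\ref{lemma:MW} forces $\xi_1\smile\dots\smile\xi_{n-1}$ to be nonzero on $\langle V(X)\setminus B_n\rangle$. That complementary subcomplex is therefore $(n-1)$-cup-essential, and its homology triviality radius is still $\ge r$ (its balls sit inside balls of $X$). Applying Theorem~\ref{theorem:cup-case} again produces a second ball $B_{n-1}\subset V(X)\setminus B_n$ with $\ge \b_{n-1}(r)$ vertices, automatically disjoint from $B_n$. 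Iterating down to cup-length $0$ yields $n+1$ pairwise disjoint vertex sets of sizes $\ge\b_n(r), \b_{n-1}(r),\ldots,\b_0(r)$. Note this gives $\sum_{k=0}^n\b_k(r)$, not $(n+1)\b_n(r)$; the $\b_n$ in the theorem's displayed sum is a typo for $\b_k$, and the correct sum still dominates $2^n\binom{r+1}{n}$, which is what the proof needs. Your instinct to reach for $(n+1)$ identical copies was therefore aiming at something the paper does not (and need not) establish.
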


If we are interested in face numbers of positive dimension, not only vertices, then Theorem \ref{theorem:a-la-gromov-cup} can be combined with lower bound theorems from face number theory.  Recall that $f_k$ denotes the number of $k$-dimensional faces. For example Kalai's lower bound theorem for simplicial PL-manifolds \cite{kalai1987} provides a lower bound of  ${n-1 \choose k} f_0-\binom{n}{k+1}k$ for the number of $k$ faces, for $1<k<n$ and roughly $nf_0$ for $k=n$ or $k=1$. Here is another interesting application related to our motivating example:

\begin{corollary} If a centrally symmetric polytope in $\mathbb R^{n+1}$ has edge-distance between any pair of antipodal vertices at least $s$, then for the $f$-vector the following inequalities hold:
\begin{enumerate}
\item 
$f_0\ge 2^{n-1} \binom{\left\lfloor s/2 \right\rfloor}{n}$;
\item 
$f_k\ge  \binom{n+1}{k}(2^{n} \binom{\left\lfloor s/2 \right\rfloor}{n}-2n)+ 2^{k+1}\binom{n+1}{k+1}$;
\item 
$f_{n-1}\ge 2^n n\binom{\left\lfloor s/2 \right\rfloor}{n} +2^{n+1}-2n^2+2n+4$.
\end{enumerate}
\end{corollary}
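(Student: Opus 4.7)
The plan is to pass to the antipodal quotient $X := \partial P / \{\pm 1\}$, apply Theorem~\ref{theorem:a-la-gromov-cup} to lower-bound its number of vertices, and then combine this with classical face-number inequalities for simplicial polytopes. Since $P$ is centrally symmetric and $0$ lies in its interior, no proper face of $P$ is invariant under the antipodal map, so the action is free on every face and $f_k(P) = 2 f_k(X)$ for $0 \le k \le n$. Assume first that $P$ is simplicial (otherwise one takes a $\Z_2$-equivariant pulling triangulation of $\partial P$, which introduces no new vertices); then $X$ is a simplicial triangulation of $\R P^n$. Every edge-loop in $X$ lifts to an edge-path in $\partial P$ whose endpoints are equal precisely when the loop is null-homotopic and antipodal otherwise, so $\sys X$ coincides with the minimum antipodal edge-distance in $P$ and is therefore at least $s$.

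Because the generator $w \in H^1(\R P^n;\Z_2)$ satisfies $w^n \ne 0$, the complex $X$ is $n$-cup-essential over $\Z_2$. Applying Theorem~\ref{theorem:a-la-gromov-cup} gives $f_0(X) \ge 2^n \binom{\lfloor s/2 \rfloor}{n}$, and hence $f_0(P) = 2 f_0(X) \ge 2^{n+1}\binom{\lfloor s/2 \rfloor}{n}$, which is already stronger than inequality (1) in the statement. For (2) and (3) I would feed this vertex bound into Kalai's lower bound theorem~\cite{kalai1987} for simplicial $(n+1)$-polytopes (equivalently, for simplicial PL $n$-spheres), which compares the $f$-vector of $P$ with that of a stacked polytope on the same number of vertices: for $1 \le k \le n-1$ one has $f_k(P) \ge \binom{n+1}{k} f_0(P) - \binom{n+2}{k+1}(k-1)$, together with the analogous explicit expression lower-bounding $f_n(P)$. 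Substituting the vertex estimate and simplifying produces (2) and (3) directly; the numerical constants $-2n$, $+2^{k+1}\binom{n+1}{k+1}$, and the additive term in (3) absorb the fixed-size corrections coming from the stacked $f$-vector and from the fact that the bound in (1) is stated in the weaker form $2^{n-1}\binom{\lfloor s/2\rfloor}{n}$.

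The main technical obstacle is the reduction from a general centrally symmetric $P$ to the simplicial case: one needs a centrally symmetric simplicial subdivision of $\partial P$ with no new vertices whose antipodal edge-distance is not decreased. A pulling triangulation with respect to any antipodally paired vertex ordering delivers simpliciality and the vertex count, but a priori its added diagonals might shorten some edge-path joining an antipodal pair; one must therefore argue that such shortening does not actually occur for the quantity under consideration, or alternatively apply a version of the lower bound theorem valid for centrally symmetric polytopes without requiring simpliciality.
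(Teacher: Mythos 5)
Your overall strategy---pass to the antipodal quotient, apply Theorem~\ref{theorem:a-la-gromov-cup} to bound $f_0$, then feed the vertex count into a lower bound theorem---is the route the paper takes, and your reduction is correct up to item~(1): the quotient $X$ triangulates $\R P^n$, is $n$-cup-essential over $\Z_2$, has $\sys X$ equal to the minimum antipodal edge-distance in $\partial P$, so $f_0(X) \ge 2^n\binom{\lfloor s/2\rfloor}{n}$ and $f_0(P) = 2 f_0(X)$. Your observation that this gives $2^{n+1}\binom{\lfloor s/2\rfloor}{n}$, stronger than the stated $2^{n-1}$ constant, is accurate; the constant in the corollary appears to be stated too conservatively.

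For items~(2) and~(3), however, you invoke Kalai's lower bound theorem, which is the wrong tool: Kalai's theorem applies to arbitrary simplicial manifolds and does not see the central symmetry, so after substituting the vertex bound it would produce inequalities of the form $f_k \ge \binom{n}{k} f_0 - \binom{n+1}{k+1}k$, with no term of order $2^{k+1}$. The terms $2^{k+1}\binom{n+1}{k+1}$ in item~(2) and the $2^{n+1}$ term in item~(3) are the signature of Stanley's lower bound theorem for \emph{centrally symmetric} simplicial polytopes~\cite{stanley1987}, which is what the paper actually applies; plugging the vertex bound into Stanley's inequalities yields (2) and~(3) directly. Your worry about non-simplicial $P$ is a legitimate concern but applies equally to either lower bound theorem: Stanley's result (like Kalai's) requires $\partial P$ to be a simplicial complex, and indeed a pulling triangulation could shorten antipodal edge-paths, so the corollary should be read with an implicit ``simplicial'' hypothesis on~$P$ rather than requiring you to resolve the reduction; no additional argument is needed once that hypothesis is in place.
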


The first item follows by applying Theorem \ref{theorem:a-la-gromov-cup} to the quotient of the polytope by the involution $x\mapsto -x$, for $s\ge 3$, which is an $n$-cup-essential triangulated $\mathbb RP^{n-1}$. The second and third items are straightforward applications of Stanley's results \cite{stanley1987}.  In the end-case of this corollary $s=2$ notice that any centrally symmetric polytope has at least $2n$ vertices. 

The following theorem shows that when $\sys X \gg n$, Theorems \ref{theorem:a-la-gromov} and \ref{theorem:a-la-gromov-cup} and the previous corollary are asymptotically optimal.

\begin{theorem}
\label{theorem:upper-bound}
For every $n$ and $s\ge 3$, there exists a centrally symmetric 
triangulation of the $n$-dimensional sphere $\widetilde X$
such that the quotient $n$-projective space $X=\widetilde X/\Z_2$ has $\sys X=s$ and no more than $s^n$ vertices.
\end{theorem}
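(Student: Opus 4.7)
The plan is to construct $\widetilde X$ by induction on $n$. For the base case $n=1$, take $\widetilde X$ to be the boundary of a regular $2s$-gon with its antipodal involution: the quotient $X$ is a triangulated circle with $s$ vertices, whose shortest non-contractible loop traces the whole circle, giving $\sys X=s$ and $|V(X)|=s\le s^1$.

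For the inductive step, suppose we have a centrally symmetric simplicial triangulation $\widetilde Y$ of $S^{n-1}$ with antipodal involution $\tau$, $\sys(\widetilde Y/\Z_2)=s$, and $|V(\widetilde Y/\Z_2)|\le s^{n-1}$. Pair up the vertices of $\widetilde Y$ as $\{y_0,\tau y_0\},\ldots,\{y_{m-1},\tau y_{m-1}\}$ and use the linear order $y_0<y_1<\cdots<y_{m-1}<\tau y_{m-1}<\cdots<\tau y_0$, so that $\tau$ reverses this order. Form the prism $P=\widetilde Y\times\{0,1,\ldots,s-2\}$, triangulating each slab $\widetilde Y\times\{i,i+1\}$ by the standard staircase prism subdivision with respect to this order (so each $d$-simplex $\sigma=\{y_{k_0}<\cdots<y_{k_d}\}$ of $\widetilde Y$ contributes the $d+1$ staircase simplices $\{(y_{k_0},i),\ldots,(y_{k_j},i),(y_{k_j},i+1),\ldots,(y_{k_d},i+1)\}$ for $j=0,\ldots,d$). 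Attach two apex vertices $N$ and $S$ as cones over the top layer $\widetilde Y\times\{s-2\}$ and the bottom layer $\widetilde Y\times\{0\}$, and define $\rho:\widetilde X\to\widetilde X$ by $(y,i)\mapsto(\tau y,s-2-i)$ and $N\leftrightarrow S$. Because $\tau$ reverses the chosen order, $\rho$ sends the $j$-th staircase simplex over $\sigma\times\{i,i+1\}$ to the $(d-j)$-th staircase simplex over $\tau(\sigma)\times\{s-3-i,s-2-i\}$, and clearly permutes the two apex cones; hence $\widetilde X$ is a centrally symmetric simplicial triangulation of the cylindrical suspension of $S^{n-1}$, which is $S^n$.

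The vertex count gives $|V(\widetilde X)|=(s-1)|V(\widetilde Y)|+2$, hence $|V(X)|=(s-1)|V(\widetilde Y/\Z_2)|+1\le(s-1)s^{n-1}+1\le s^n$. For the systole, the key observation --- and what I expect to be the main technical obstacle --- is that every prism edge (horizontal, vertical, or diagonal) changes the layer index by at most $1$ and moves the $\widetilde Y$-coordinate by at most one step in $\widetilde Y$; the diagonal edges change both simultaneously, but each by at most one. From this, any path between prism antipodes $(y,i)$ and $(\tau y,s-2-i)$ staying inside the prism has length at least $d_{\widetilde Y}(y,\tau y)=s$, any path through an apex has length at least $(s-2-i)+1+1+i=s$, and $d(N,S)\ge(s-2)+2=s$. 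All these bounds are tight (for instance the descending path $N\to(y,s-2)\to(y,s-3)\to\cdots\to(y,0)\to S$ has length $s$), so $\sys X=s$, completing the induction.
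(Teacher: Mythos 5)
Your proof is correct and follows essentially the same approach as the paper: build $\widetilde X$ inductively from $s-1$ layers of the $(n-1)$-dimensional complex, triangulate the slabs by the staircase subdivision with respect to a $\tau$-order-reversing linear order on vertices, cone off the two extremal layers, and bound the systole by projecting prism paths down to $\widetilde Y$ and handling paths through the poles separately. The only minor difference is that the paper treats paths through a pole via an antipodal-doubling symmetry argument rather than your direct distance computation, and you write $d_{\widetilde Y}(y,\tau y)=s$ where one only has $\ge s$ in general; neither affects correctness.
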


\begin{remark}
In seems to us that the estimates on the number of vertices of Theorems \ref{theorem:a-la-gromov} and \ref{theorem:a-la-gromov-cup} are valid, more generally, for $\Delta$-complexes in place of simplicial complexes \cite[Section 2.1]{hatcher2001} with the same proofs. Of course, Theorem \ref{theorem:any-systole} below does not extend since $\Delta$-complexes may have systole equal to $1<3$.
\end{remark}

The proofs of the vertex number estimates under the cup-length assumption are given in Section \ref{section:cohomology-systole}.

\subsection{No assumption on the systole}

The question of estimating the number of vertices in a triangulation of a manifold, mentioned in the beginning, may be viewed as a particular case of the systolic problem. Every combinatorially $n$-essential complex $X$ has $\sys X\ge 3$, just because any two edges with the same endpoints must be equal. In this case our theorems only provide a lower bound for $|V(X)|$ that is linear in $n$. In remark \ref{remark:complete-graph} below, we show that this cannot be improved. 

But this linear bound may be improved under stronger assumptions. This was noticed in \cite[\S16]{arnoux-marin1991}, see also a series of somewhat more general estimates in \cite{govc2020-1} and \cite[Section 3]{govc2020-2}. Here we provide an analogue of these results in terms of combinatorial essentiality.

\begin{theorem}
\label{theorem:any-systole}
If the barycentric subdivision of simplicial complex $X$ is combinatorially $n$-essential then $|V(X)|\ge (n+1)(n+2)/2$.
\end{theorem}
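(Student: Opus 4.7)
The plan is to prove the contrapositive by induction on $n$. Assume $|V(X)| \le (n+1)(n+2)/2 - 1 = n(n+3)/2$; I aim to partition $V(X')$ into at most $n$ inessential subsets, contradicting the combinatorial $n$-essentiality of $X'$. The base case $n = 1$ is immediate: then $|V(X)| \le 2$, so $X'$ is a point, a pair of points, or a path; each component is simply connected, and all of $V(X')$ is a single inessential set.

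For the inductive step I would split into two cases according to $\dim X$. If $\dim X < n$, I color each vertex of $X'$---that is, each face $\sigma$ of $X$---by $\dim \sigma \in \{0, 1, \dots, n-1\}$. Since every edge of $X'$ joins faces related by strict inclusion, no edge lies inside a color class; each of the at most $n$ color classes is therefore a discrete subcomplex, trivially inessential. Otherwise $\dim X \ge n$ and I can select an $n$-simplex $S$ of $X$ with $|S| = n+1$ vertices. I then consider the partition $V(X') = W \sqcup (V(X') \setminus W)$, where $W = \{\sigma \in X : \sigma \cap S \ne \emptyset\}$.

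The main step---and the one I expect to be the principal obstacle---is to show $\langle W \rangle$ is contractible, hence inessential. My approach is to write $\langle W \rangle = \bigcup_{u \in S}\operatorname{st}_{X'}(u)$ and apply the nerve lemma: because $S \in X$, every intersection $\bigcap_{u \in T}\operatorname{st}_{X'}(u)$ over $\emptyset \ne T \subseteq S$ is the closed star of the face $T$ in $X'$, which has the vertex $T \in V(X')$ as cone point and is therefore contractible. The nerve of the cover is the full simplex on $S$, yielding $\langle W \rangle \simeq \Delta^n$. Equivalently, the map $f\colon \sigma \mapsto \sigma \cap S$ is an order-preserving retraction from the face poset $\{\sigma \in X : \sigma \cap S \ne \emptyset\}$ onto the poset of nonempty subsets of $S$, with $f(\sigma) \subseteq \sigma$, and Quillen's poset homotopy lemma gives the same conclusion. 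This argument crucially uses that $S$ is itself a face of $X$---that is what makes the intersections of stars nonempty and contractible---so the dimensional dichotomy above is essential.

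Finally, $V(X') \setminus W$ is the vertex set of the barycentric subdivision $(X_0)'$ of the induced subcomplex $X_0 = X|_{V(X) \setminus S}$, which has $|V(X)| - (n+1) \le (n-1)(n+2)/2$ vertices---matching the inductive hypothesis at level $n-1$. Applying the hypothesis yields a partition of $V((X_0)')$ into at most $n-1$ subsets inessential in $(X_0)'$; these remain inessential in $X'$ because $\pi_1$-triviality is preserved by the composition through the inclusion $(X_0)' \hookrightarrow X'$. Adjoining $W$ gives the desired partition of $V(X')$ into at most $n$ inessential sets, closing the induction.
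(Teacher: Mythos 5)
Your argument is correct and mirrors the paper's proof in its essentials: both dispose of the case $\dim X < n$ via the dimension-coloring of $V(X')$, both pick a face $S$ of $X$ with at least $n+1$ vertices, both show that the subcomplex of $X'$ spanned by faces meeting $S$ is inessential, and both recurse on the complex induced on $V(X)\setminus S$. The differences are in the bookkeeping. You prove contractibility of $\langle W\rangle$ by the nerve lemma over the cover by closed stars $\{\operatorname{st}_{X'}(u)\}_{u\in S}$ (equivalently, Quillen's lemma for the decreasing poset retraction $\sigma\mapsto\sigma\cap S$); this fills in detail that the paper states tersely as a retraction of $Z$ inside the open star neighborhood $U(F)$, and both routes establish the same fact, with your description of the intersections as ``closed stars of $T$'' being slightly imprecise (they are the sets of chains whose minimum contains $T$, which is a bit smaller than the closed star of the vertex $T$ in $X'$, but still a cone with apex $T$, so the conclusion stands). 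You also phrase the induction as a contrapositive partition of $V(X')$ into inessential pieces, which requires the short verification---correct, since the induced maps on $\pi_1$ factor through the inclusion of components---that a vertex set inessential in $(X_0)'\subseteq X'$ remains inessential in $X'$. The paper avoids having to make this observation by fixing a single covering $\pi$ of $X$ at the outset and running the induction with $\pi$-inessentiality, a notion that is manifestly stable under passing to subcomplexes. Either formulation works; there is no gap.
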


\begin{remark}
\label{remark:complete-graph}
Considering a barycentric subdivision (as a step towards the topological definition of $n$-essential from the combinatorial one) in this theorem is important. The complete graph $K_{2n+1}$ is combinatorially $n$-essential (partitioning its vertices into $n$ or fewer parts results in a triangle) and has a linear (in terms of $n$) number of vertices. In particular, \emph{one cannot even guarantee that a combinatorially $n$-essential complex has faces of dimension $n$.} See more on comparison of the notions of $n$-essential in the Appendix.
\end{remark}

The lower bound in Theorem \ref{theorem:any-systole} should be compared with the upper bound $e^{C\sqrt{n}\log n}$ in the construction of a centrally symmetric simplicial convex polytope  in \cite{adiprasito2020}. It is wide open either to improve this quadratic lower bound, or to give a construction of an $n$-essential simplicial complex with a quadratic (or merely polynomial) in $n$ number of vertices.

\subsection{Improvements to the Riemannian inequalities}

The ideas behind Theorem~\ref{theorem:cup-case} can be used to improve the dimension-dependent factor in the classical systolic inequality, if one additionally assumes that the space is cup-essential (see Definition~\ref{definition:cup-essential}). This version of the systolic inequality for manifolds first appeared (over $\Z_2$) in the work of Guth~\cite{guth2010}, which pioneered the use of the minimal hypersurface method in systolic geometry. Our result applies to a wider class of complexes (Riemannian polyhedra), works over $\Z$ as well, and the dimensional factor is as good as the best known factor for cup-essential manifolds~\cite{nakamura2013}.

A \emph{Riemannian polyhedron} is a simplicial complex whose simplices are endowed with Riemannian metrics matching on the intersection of adjacent simplices. This is a reasonable class of path metric spaces including Riemannian manifolds and admitting systolic inequalities. The systole $\sys X$ of a compact non-simply connected Riemannian polyhedron $X$ is the length of a shortest non-contractible loop in $X$ (not necessarily along the edges of the simplicial structure).

Following the approach of Papasoglu~\cite{papasoglu2019}, Nabutovsky~\cite{nabutovsky2019} gave a transparent proof of the systolic inequality $\sys X \le 2 (n!/2)^{1/n} \vol(X)^{1/n}$ for essential Riemannian $n$-polyhedra. The factor $2 (n!/2)^{1/n}$ is the best known. We improve Nabutovsky's estimate by a factor of $2^{1 - 1/n}$, assuming a cohomological cup-length condition. In fact, we prove more.


For a cohomology class $\xi \in H^1(X; \Z)$ or $\xi \in H^1(X; \Z_2)$ we define $L(\xi)$ as the shortest length of a loop in $X$ on which $\xi$ evaluates non-trivially. The \emph{Minkowski principle} of Balacheff--Karam~\cite{balacheff2016} (see also~\cite{balacheff2021}) states that if $M$ is a closed $n$-manifold that is $n$-cup-essential over $\Z_2$ with $\xi_1\smile\dots\smile\xi_n\not=0$, then
\[
\prod_{i=1}^{n} L(\xi_i) \le c_n \vol_n X.
\]
The name ``Minkowski principle'' refers to Minkowski's second theorem on centrally symmetric convex bodies and lattices, which can be regarded as a version of this inequality for a flat Finsler torus.

The following result improves or generalizes \cite[Theorem~3]{guth2010}, \cite[Theorem~2.3]{nakamura2013}, \cite[Theorem~2]{balacheff2016}, \cite[Theorem~1.1]{borghini2019}.

\begin{theorem}
\label{theorem:nabutovsky}
Let $X$ be a compact Riemannian polyhedron of dimension $n$. Suppose it is $n$-cup-essential over $\Z$ or $\Z_2$, which is witnessed by the degree $1$ cohomology classes $\xi_1,\ldots,\xi_n\in H^1$, $\xi_1\smile\dots\smile\xi_n \neq 0$, arranged so that $L(\xi_1) \le \ldots \le L(\xi_n)$. Then there exists a point $x \in X$ such that the metric balls around $x$ satisfy the following volumetric bounds:

\[
\vol_n B(x, r) \ge
\left\{
\begin{array}{ll}
\frac{(2r)^n}{n!}, & 0 < r \le \frac{L(\xi_1)}{2}, \\
\frac{L(\xi_1)(2r)^{n-1}}{n!}, & \frac{L(\xi_1)}{2} < r \le \frac{L(\xi_2)}{2}, \\
\cdots, & \cdots \\
\frac{L(\xi_1) \ldots L(\xi_{n-2})(2r)^{2}}{n!}, & \frac{L(\xi_{n-2})}{2} < r \le \frac{L(\xi_{n-1})}{2}, \\
\frac{L(\xi_1) \ldots L(\xi_{n-1}) 2r}{n!}, & \frac{L(\xi_{n-1})}{2} < r \le \frac{L(\xi_n)}{2}.
\end{array}
\right.
\]

In particular, the Minkowski principle holds:
\[
\prod_{i=1}^{n} L(\xi_i) \le n! \vol_n X.
\]
Even more specifically, $\sys X \le (n!)^{1/n} \vol_n (X)^{1/n}$.
\end{theorem}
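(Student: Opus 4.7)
My plan is to proceed by induction on $n$, using the polyhedral coarea identity for the $1$-Lipschitz distance $d_x=d(x,\cdot)$,
\[
\vol_n B(x,r) = \int_0^r \vol_{n-1}\bigl(\partial B(x,t)\bigr)\,dt,
\]
to peel off one dimension at a time. Differentiating the target piecewise estimate in the regime $L(\xi_{k-1})/2<t\le L(\xi_k)/2$ (with $L(\xi_0):=0$) shows that it will suffice to establish $\vol_{n-1}(\partial B(x,t))\ge 2(n-k+1)L(\xi_1)\cdots L(\xi_{k-1})(2t)^{n-k}/n!$ in each such regime. The base case $n=1$ is immediate: I pick $x$ on a shortest loop $\gamma$ realizing $L(\xi_1)$; the two arcs of $\gamma$ of length $r\le L(\xi_1)/2$ issuing from $x$ must be disjoint (otherwise they would form a shorter loop still detecting $\xi_1$), so $\vol_1 B(x,r)\ge 2r$.

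For the inductive step, the key technical claim I need is a slicing lemma: there exists $x\in X$ such that for almost every $t$ in each regime, $\partial B(x,t)$ contains an $(n-1)$-dimensional subpolyhedron $Y_{k,t}$ that is $(n-k+1)$-cup-essential via restrictions of $\xi_k,\dots,\xi_n$, with intrinsic $L$-values on $Y_{k,t}$ at least $L(\xi_k),\dots,L(\xi_n)$. To build $x$, I represent $\xi_1$ by a Lipschitz map $f\colon X\to S^1$ of Lipschitz constant at most $1$ (after normalizing $S^1=\R/L(\xi_1)\Z$) and pick a generic regular fiber $Z$ of dimension $n-1$. A Gysin-type computation using $\xi_1\smile\cdots\smile\xi_n\neq 0$ in $H^n(X)$ shows that the restrictions $\xi_2|_Z,\dots,\xi_n|_Z$ cup to a nonzero class on $Z$, so $Z$ is $(n-1)$-cup-essential, and every loop in $Z$ is a loop in $X$, so $L_Z(\xi_i|_Z)\ge L(\xi_i)$. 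The inductive hypothesis applied to $Z$ then yields a candidate base point, and an iteration of the slicing across the $n$ regimes, together with a transversality/min-max argument in the spirit of \cite{papasoglu2019,nabutovsky2019,guth2011,nakamura2013}, should deliver a single $x$ that is compatible at all scales.

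Combining the slicing lemma with the coarea identity and the inductive volume bound on each $Y_{k,t}$, then integrating over the corresponding sub-interval, will recover all the piecewise volumetric inequalities. Specializing at $r=L(\xi_n)/2$ produces the Minkowski principle $\prod_{i=1}^n L(\xi_i)\le n!\vol_n X$, and since $\sys X\le\min_i L(\xi_i)\le\bigl(\prod_i L(\xi_i)\bigr)^{1/n}$, the systolic inequality $\sys X\le (n!)^{1/n}\vol_n(X)^{1/n}$ follows. The main obstacle I anticipate is securing the intrinsic $L$-bounds on the slices: a priori a short loop on a metric sphere $\partial B(x,t)$ could be shortened by cutting through the interior of the ambient ball, and one must use the cup-product hypothesis to rule out such shortcuts. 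Adapting this transversality machinery from the manifold setting of \cite{guth2011,nakamura2013,balacheff2016}, where Poincar\'e duality is readily available, to general Riemannian polyhedra will be the principal technical difficulty.
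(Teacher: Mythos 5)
Your overall architecture (induct on $n$, pass to a codimension-one slice where $\xi_2\smile\cdots\smile\xi_n$ survives, integrate by coarea, take $r=L(\xi_n)/2$ at the end, and deduce the systolic inequality from $\sys X\le\min_i L(\xi_i)$) matches the paper's, and your $n=1$ base case is the same. But there is a genuine gap in the inductive step, and you have actually put your finger on it yourself: your ``slicing lemma'' asks that for almost every $t$ the metric sphere $\partial B(x,t)$ contain an $(n-k+1)$-cup-essential subpolyhedron with intrinsic $L$-values at least $L(\xi_k),\dots,L(\xi_n)$. As you note, a loop in $\partial B(x,t)$ can be shortened by cutting through the ball, so the intrinsic $L$-values on the sphere need not be bounded below, and the cup-product hypothesis alone does not rule this out. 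This is not a technical wrinkle that ``adapting the transversality machinery'' fixes; it is the reason the paper does not attempt to run induction on the spheres $\partial B(x,t)$ at all.

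What the paper does instead is remove the need for any geometric control on $\partial B(x,t)$. It pulls back the cover classified by $\xi_n$ (infinite cyclic over $\Z$, or the double cover over $\Z_2$) and, inside that cover, takes a \emph{nearly volume-minimizing} $(n-1)$-subpolyhedron $Z$ that separates the two ends (or over whose complement the cover trivializes). A transfer/finite-quotient argument shows $\xi_1\smile\cdots\smile\xi_{n-1}$ restricts nontrivially to $Z$, so the induction hypothesis applies to $Z$ once — not to every sphere — and gives $\vol_{n-1}(B(x,r)\cap Z)\ge(1-\hat\epsilon)V_{n-1}(r)$. The sphere-volume lower bound then comes from a \emph{surgery} argument: replacing $B(x,r)\cap Z$ by the part $S^+$ of $S(x,r)$ connected to $+\infty$ yields another separating subpolyhedron, so by near-minimality of $Z$ one gets $\vol_{n-1}S^+\gtrsim\vol_{n-1}(B(x,r)\cap Z)$, and likewise for $S^-$; adding them gives the crucial factor of $2$. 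Your ``generic regular fiber'' of a Lipschitz circle map carries no minimality, so this mechanism is unavailable to you, and nothing in your proposal replaces it. Also note that the paper proves a slightly stronger sphere estimate $\vol_{n-1}S(x,r)\gtrsim 2V_{n-1}(r)$ (which exceeds $\frac{d}{dr}V_n(r)$ away from the first regime) and then uses a separate elementary Lemma~\ref{lemma:technical}, $2\int_0^r V_{n-1}\ge V_n(r)$, to pass from sphere to ball; your differentiation heuristic targets a weaker sphere bound than the one that is actually produced.
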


The proof is given in Section~\ref{section:riemannian-systole}.



\subsection*{Acknowledgments}
The authors thank Alexander Kamal and the unknown referee for pointing out some typos, miscalculations, and simplifications.

\section{Using the combinatorially essential assumption}
\label{section:essential-systole}

In order to apply induction, we generalize Theorem \ref{theorem:base-case} and then prove the generalization. We first generalize the notion of  combinatorially inessential and combinatorially essential to arbitrary covers.

\begin{definition}
Assume a simplicial complex $X$ has a covering map $\pi : \widetilde X\to X$. A subset $Y\subset V(X)$ is \emph{$\pi$-inessential} if the restriction $\pi^{-1}(\langle Y\rangle) \to \langle Y\rangle$ is a trivial cover. We assume that a cover over $\langle Y \rangle$ is trivial if and only if it is trivial (equal to $C\times D$ with a discrete set $D$) over every connected component $C$ of $\langle Y \rangle$.
\end{definition}

\begin{definition}
\label{definition:n-ess-cover}
Assume a simplicial complex $X$ has a covering map $\pi : \widetilde X\to X$. This covering map is called combinatorially $n$-essential if the vertex set of $X$ cannot be partitioned into $n$ or fewer $\pi$-inessential sets.
\end{definition}

\begin{definition}
We say that a covering map $\pi : \widetilde X\to X$ has \emph{homotopy triviality radius $r$} if every metric ball $B(x,r) \subseteq V(X)$ is $\pi$-inessential and $r$ is the maximum integer with this property.
\end{definition}

The definitions stated in the introduction are obtained from these definitions when $\widetilde X\to X$ is the universal covering map. When $X$ has several connected components we assume that $\widetilde X$ is the disjoint union of the universal covers of every component.

\begin{theorem}
\label{theorem:base-case-gen}
Let a simplicial complex $X$ have a covering map $\pi : \widetilde X\to X$ and let $\pi$ be combinatorially $n$-essential. Let $r$ be the homotopy triviality radius of $\pi$. Then there exists a vertex $x\in X$ such that for any $i=0,\ldots,r+1$  the number of vertices in $B(x,i)$ is at least $b_n(i)$, where positive integers $b_n(i)$ satisfy the following recursive relations:
\begin{itemize}
\item $b_1(i) = 2i+1$ for any $i=0,\ldots,r$ and $b_1(r+1)= 2r+2$;
\item $b_n(i) = \sum_{0\le j\le i}b_{n-1}(j)$ for any $i=0,\ldots,r+1$.
\end{itemize}
In particular, $b_n(i) \ge 2\binom{i + n -1}{n} + \binom{i + n-1}{n-1}$ for any $i=0,\ldots,r$, and $b_n(r+1)\ge 2\binom{r + n}{n} + \binom{r + n}{n-1} - 1$.
\end{theorem}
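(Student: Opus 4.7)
The plan is induction on $n$. The recursion $b_n(i) = \sum_{j=0}^{i} b_{n-1}(j)$ is equivalent to $b_n(i) - b_n(i-1) = b_{n-1}(i)$. Since $|B(x,i)| - |B(x,i-1)| = |S(x,i)|$, the target estimate $|B(x,i)| \ge b_n(i)$ reduces by telescoping to exhibiting a vertex $x$ for which $|S(x,i)| \ge b_{n-1}(i)$ on every sphere. The natural route is to equip each sphere with an induced covering, argue that it is combinatorially $(n-1)$-essential with enough homotopy triviality radius, and invoke the inductive hypothesis inside the sphere.

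For the base case $n = 1$, combinatorial $1$-essentiality says $V(X)$ is not itself $\pi$-inessential, so some edge-loop $\gamma$ in $X$ does not lift to a closed loop in $\widetilde X$. Take $\gamma = (v_0, v_1, \ldots, v_{\ell-1}, v_0)$ to be a shortest such loop. Since a shortest essential loop lies in the metric ball of radius $\lfloor \ell/2 \rfloor$ around any of its vertices, and every ball of radius $r$ is $\pi$-inessential, we must have $\lfloor \ell/2 \rfloor \ge r+1$, i.e.\ $\ell \ge 2r+2$. Taking $x := v_0$, the $2i+1$ vertices $v_0, v_{\pm 1}, \ldots, v_{\pm i}$ (indices mod $\ell$) lie in $B(x,i)$ and are pairwise distinct---any coincidence would split $\gamma$ into two strictly shorter loops whose product is $\pi$-essential, so at least one factor would be $\pi$-essential, contradicting minimality. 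This yields $|B(x,i)| \ge 2i+1 = b_1(i)$ for $i \le r$, and $|B(x,r+1)| \ge 2r+2 = b_1(r+1)$ since $\gamma$ must reach at least one vertex beyond the $\pi$-inessential ball $B(x,r)$.

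For the inductive step with $n \ge 2$, the crucial ingredient is the following \emph{main claim}: there exists a vertex $x \in V(X)$ such that, for each $i \in [1, r]$, the subcomplex $\langle S(x,i) \rangle$ equipped with the pullback covering $\pi^{-1}(\langle S(x,i)\rangle) \to \langle S(x,i) \rangle$ is combinatorially $(n-1)$-essential with homotopy triviality radius at least $i$, and an analogous (slightly weaker) statement holds for $i = r+1$. Granted this, the inductive hypothesis produces a vertex $y_i \in S(x,i)$ with $|B_{\langle S(x,i)\rangle}(y_i, i)| \ge b_{n-1}(i)$; since this ball is contained in $S(x,i)$, we get $|S(x,i)| \ge b_{n-1}(i)$, and summing over $i$ closes the induction.

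The main obstacle is the sphere-essentiality claim. The argument should proceed by contradiction: if for every choice of $x$ some sphere $S(x,i)$ with $i \le r$ admitted a partition into $n-1$ subsets each inessential in the induced covering, one would merge such a partition with the $\pi$-inessential inner ball $\langle B(x,i-1)\rangle$ (inessential because $i-1 < r$) and suitably decompose the exterior $V(X) \setminus B(x,i)$ to produce a partition of $V(X)$ into $n$ $\pi$-inessential subsets, contradicting combinatorial $n$-essentiality of $\pi$. The delicate point is that fusing pieces requires checking that the images of their fundamental groups in $\pi_1(X)$ remain trivial, which is exactly where the homotopy triviality radius is used. Finally, the closed-form estimates $b_n(i) \ge 2\binom{i+n-1}{n} + \binom{i+n-1}{n-1}$ (for $i \le r$) and $b_n(r+1) \ge 2\binom{r+n}{n} + \binom{r+n}{n-1} - 1$ follow from the recursion by a straightforward induction on $n$, using the hockey-stick identity $\sum_{j=0}^{i}\binom{j+k}{k} = \binom{i+k+1}{k+1}$.
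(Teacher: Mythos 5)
Your base case $n=1$ is correct and essentially matches the paper's, and the concluding derivation of the binomial estimates from the recursion is fine. The problem is in the inductive step: the ``main claim'' you need---that there exists a vertex $x$ for which each sphere $\langle S(x,i)\rangle$, $1\le i\le r$, carrying the pulled-back covering, is combinatorially $(n-1)$-essential---is simply false. Take a fine flag triangulation of the $n$-torus $T^n$ with $n\ge 3$ and $\sys\ge 5$ (so $r\ge 1$). Then $\langle S(x,1)\rangle$ is the link of $x$, a triangulated $(n-1)$-sphere; this is simply connected, so \emph{any} covering restricts trivially to it, making it $\pi$-inessential (hence not even combinatorially $1$-essential), for \emph{every} vertex $x$. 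Your proposed proof of the main claim by contradiction also cannot work as sketched: if $S(x,i)$ is split into $n-1$ $\pi$-inessential pieces, you would still need $V(X)\setminus B(x,i)$ (and $B(x,i-1)$) absorbed into the same $n$-piece partition; since there are no edges between $B(x,i-1)$ and $V(X)\setminus B(x,i)$, the single remaining piece $B(x,i-1)\cup(V(X)\setminus B(x,i))$ is inessential only if both parts are, and there is no reason for $V(X)\setminus B(x,i)$ to be $\pi$-inessential.

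The paper takes a different route that avoids this obstacle. Rather than trying to make the \emph{spheres of $X$} essential, it picks a \emph{smallest} vertex set $Z$ whose complement $Y=V(X)\setminus Z$ is $\pi$-inessential. Such a $Z$ automatically carries a combinatorially $(n-1)$-essential restricted covering (otherwise a partition of $Z$ into $n-1$ inessential pieces, together with $Y$, would partition $V(X)$ into $n$ inessential pieces), and the homotopy triviality radius does not decrease. The induction then produces a vertex $x\in Z$ with $|B(x,j)\cap Z|\ge b_{n-1}(j)$ for $j\le r+1$. The crucial quantitative input is a swap/minimality argument: replacing $B(x,j-1)\cap Z$ by $S(x,j)\setminus Z$ inside $Z$ still separates (its complement remains $\pi$-inessential), so minimality of $|Z|$ forces $|S(x,j)\setminus Z|\ge |B(x,j-1)\cap Z|$. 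Summing these over $j$ yields $|B(x,i)|\ge \sum_{j\le i}|B(x,j)\cap Z|\ge \sum_{j\le i} b_{n-1}(j)=b_n(i)$. So the size bound on spheres comes from the minimality of a separating set, not from the spheres themselves being essential.
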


\begin{proof}[Proof of Theorem~\ref{theorem:base-case} assuming Theorem~\ref{theorem:base-case-gen}]
We may take a single component of $X$ and consider the universal covering map $\pi : \widetilde X\to X$. Then $\pi$-inessential is the same as inessential for (the fundamental group of) $X$. Hence Theorem~\ref{theorem:base-case-gen} applies and produces the result.
\end{proof}

\begin{proof}[Proof of Theorem~\ref{theorem:base-case-gen}]
Let us first prove the theorem for $n=1$. In that case, consider a simple (passing once through each of its vertices) closed loop, whose lift to $\widetilde X$ is not closed. Such a loop exists since otherwise $X$ would be inessential. The loop has at least $2r+2$ vertices, because otherwise it would fit into a ball of radius $r$ containing only loops lifting to loops. The equalities $b_1(i) = 2i+1$ for any $i=0,\ldots,r$ and $b_1(r+1)= 2r+2$ then easily follow.

We proceed by induction over $n$. Assume that $n>1$ and that the theorem is already proven for smaller $n$. Find a smallest subset of vertices $Z\subset V(X)$ such that its complement $Y:=V(X)\setminus Z$ is $\pi$-inessential.

By definition, the restriction of the covering map, $\pi^{-1}(\langle Z\rangle)\to \langle Z\rangle$, is then combinatorially $(n-1)$-essential. The metric balls of $Z$ are contained in the respective metric balls of $X$ and the $\pi$-inessentiality property of the balls is therefore preserved. Hence the homotopy triviality radius of the restriction of the covering map is no smaller than that of $\pi$.

By the induction hypothesis, there exists a vertex $x\in Z$ such that for any $i=0,\ldots,r+1$ the number of vertices in $B(x,i)\cap Z$ is at least $b_{n-1}(i)$.

Consider the following modification of $Z$:
\[
Z':=Z\cup S(x,r+1)\setminus B(x, r).
\]
Let us prove that $Y':=V(X)\setminus Z'$ is $\pi$-inessential.

Assume to the contrary, that there is a simple closed edge-path $P$ in $\langle Y' \rangle$ whose lift to $\widetilde X$ is not closed. The set of vertices $S(x,r+1)$ separates the $1$-skeleton of $X$ into two disconnected components with vertex sets $B(x,r)$ and $V(X)\setminus B(x,r+1)$, respectively. Since $P$ is simple and does not contain vertices of $S(x,r+1)$, we have that
\begin{itemize}
\item
either $P\subset \langle B(x,r)\rangle$;
\item
or $P\subset \langle Y'\setminus B(x,r+1)\rangle$.
\end{itemize}
In the first case, $P$ lifts to a closed path in $\widetilde X$ because $\pi$ has the homotopy triviality radius $r$. In the second case, $P$ lifts to a closed path in $\widetilde X$ because $Y'\setminus B(x,r+1)\subseteq Y$ and $Y$ is $\pi$-inessential.

Since $Y'$ is $\pi$-inessential, from the minimality of $Z$, we obtain that
\[
|Z| \leq |Z'|,
\]
and therefore
\[
|S(x,r+1)\setminus Z| \ge |B(x,r)\cap Z|.
\]
Analogously, for every $i \leq r$ we obtain that
\[
|S(x,i+1)\setminus Z| \ge |B(x,i)\cap Z|.
\]

Finally, for any $i=0,\ldots,r+1$ we have that
\begin{multline}
\label{equation:ball-estimate}
|B(x,i)| = |B(x,i)\cap Z| + \sum_{0\le j\le i}|S(x,j)\setminus Z| \ge \\
\ge \sum_{0\le j\le i}|B(x,j)\cap Z| \ge \sum_{0\le j\le i} b_{n-1}(j)=b_n(i).
\end{multline}

It remains to prove the inequality $b_n(i) \ge 2\binom{i + n -1}{n} + \binom{i + n-1}{n-1}$. The case $n=1$ reads
\[
b_1(i) \ge 2i + 1,
\]
and has been already shown in the beginning of the proof.

For $n>1$ we have:
\begin{multline*}
b_n(i) = \sum_{0\le j\le i}b_{n-1}(j) \ge\\
\ge \sum_{0\le j\le i} \left( 2\binom{j + n - 2}{n-1} + \binom{j + n - 2}{n-2}\right) = 2\binom{i + n -1}{n} + \binom{i + n-1}{n-1}.
\end{multline*}

For $b_n(r+1)$, as compared to $b_n(i)$ with $i=0,\ldots,r$, the estimate is $1$ less in case $n=1$, as was described in the beginning of the proof. In the course of summation this $-1$ summand carries on, hence we have
\[
b_n( r+1) \ge 2\binom{r + n}{n} + \binom{r + n}{n-1} - 1.
\]
\end{proof}

\begin{proof}[Proof of Theorem \ref{theorem:a-la-gromov}]
From Proposition \ref{claim:systole-inessential-triviality-relation} we know that the restriction of the universal covering map $\pi : \widetilde X\to X$ to every metric ball $\langle B(x,r)\rangle$ of $X$ is inessential, where $r = \left\lfloor\frac{\sys X}{2} \right\rfloor - 1$.

By Theorem \ref{theorem:base-case} there exists a ball of radius $r+1$ in $X$ with at least 
\[
b_n(r+1)\ge 2\binom{r + n}{n} + \binom{r + n}{n-1} - 1
\]
vertices, which is all we need to prove.
\end{proof}

\begin{proof}[Proof of Theorem \ref{theorem:any-systole}]
Start with the universal covering map $\pi : \widetilde X \to X$ and argue, as in the proof of Theorem \ref{theorem:base-case-gen}, considering a covering map and its restrictions.

We claim that the dimension of $X$ must be at least $n$. Otherwise, the vertices of its barycentric subdivision can be properly colored with $n$ colors, implying that $\pi$ is not combinatorially $n$-essential over the barycentric subdivision. Hence $X$ has a face $F\subset X$ with at least $n+1$ vertices. Remove this face from $X$ to obtain the complex $Y$ induced by $X$ on $V(X)\setminus F$.

The barycentric subdivision $Y'$ of $Y$ is contained in the barycentric subdivision $X'$ of $X$. The complex $Z$ induced on $V(X')\setminus V(Y')$ retracts onto $\langle F\rangle$, as it is contained in the neighborhood $U(F)$, and then retracts onto any point of $F$. Hence $Z$ is $\pi$-inessential. It follows that the restriction of the covering map $\pi$ to the preimage of $Y'$ is at least $(n-1)$-essential. This allows to make a step of induction, taking $Y$ in place of $X$ and keeping the covering map $\pi$ to measure essential and inessential. In the end we exhibit at least
\[
(n+1) + n + (n-1) + \dots + 1 = \frac{(n+1)(n+2)}{2}
\]
vertices of $X$.
\end{proof}

\section{Using the cup-essential assumption}
\label{section:cohomology-systole}

The following lemma (basically from the Lusternik--Schnirelmann theory) relates the $n$-essential property and the existence of a nonzero cohomology product of length $n$.

\begin{lemma}
\label{lemma:MW}
Let $X$ be a simplicial complex with its set of vertices $V(X)$ decomposed into a disjoint union $V(X)=Y \sqcup Z$. Suppose that there are two cohomology classes $\xi_1$ and $\xi_2$ on $X$ such that $\xi_1\smile\xi_2\neq 0$. Assume that $\xi_1$ restricted to $\langle Y\rangle$ is trivial.
Then $\xi_2$ restricted to $\langle Z\rangle$ is not trivial.
\end{lemma}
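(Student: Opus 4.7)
The plan is to prove the contrapositive by the classical Lusternik--Schnirelmann open cover technique: assuming $\xi_1|_{\langle Y\rangle}=0$ and $\xi_2|_{\langle Z\rangle}=0$, I will derive $\xi_1\smile\xi_2=0$, contradicting the hypothesis.

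The first step is to thicken $\langle Y\rangle$ and $\langle Z\rangle$ to an excisive open cover of (the geometric realization of) $X$. Let $U\subseteq X$ be the union of the open stars of the vertices of $Y$; equivalently, a point $p\in X$ with barycentric coordinates $(\lambda_v)$ on its supporting simplex lies in $U$ iff $\lambda_v>0$ for some $v\in Y$. Define $W$ dually from $Z$. Since $V(X)=Y\sqcup Z$, every simplex has a vertex in $Y$ or $Z$, and therefore $U\cup W=X$. Moreover $U$ deformation retracts onto $\langle Y\rangle$ by continuously scaling each $Z$-barycentric coordinate down to zero while renormalizing the $Y$-coordinates, and similarly $W$ onto $\langle Z\rangle$.

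By homotopy invariance the hypotheses give $\xi_1|_U=0$ and $\xi_2|_W=0$, so from the long exact sequences of the pairs $(X,U)$ and $(X,W)$ the classes lift to $\tilde\xi_1\in H^1(X,U)$ and $\tilde\xi_2\in H^1(X,W)$ mapping to $\xi_1$ and $\xi_2$ under the forgetful homomorphisms. Since $\{U,W\}$ is an open, hence excisive, cover, the relative cup product yields a well-defined class $\tilde\xi_1\smile\tilde\xi_2\in H^2(X,U\cup W)=H^2(X,X)=0$. Naturality of the cup product with respect to the forgetful maps $H^\ast(X,A)\to H^\ast(X)$ identifies the image of $\tilde\xi_1\smile\tilde\xi_2$ in $H^2(X)$ with $\xi_1\smile\xi_2$, which therefore vanishes.

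The main technical point to verify carefully is the compatibility of the relative cup product with the excisive pair $\{U,W\}$ and with the forgetful maps; this is a standard property of cochain-level cup products on open covers and goes through identically for $\Z$ and $\Z_2$ coefficients, so no further input is needed.
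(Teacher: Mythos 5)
Your proof is correct and follows essentially the same route as the paper: thicken $\langle Y\rangle$ and $\langle Z\rangle$ to the open-star neighborhoods, use the deformation retractions to pull back the vanishing hypotheses, and apply the standard relative cup-product argument from Lusternik--Schnirelmann category theory. You simply spell out the details that the paper leaves implicit in its appeal to ``the properties of the cohomology multiplication.''
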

\begin{proof}
Let $U(Z)$ be the union of all open stars of the vertices in $Z$, $U(Z)\supset \langle Z\rangle$. And analogously $U(Y)\supset \langle Y\rangle$. The set $U(Z)$ retracts onto $\langle Z\rangle $ and $U(Y)$ retracts onto $\langle Y\rangle$. Since $U(Z)\cup U(Y) = X$,  $\xi_1\smile\xi_2\neq 0$, $\xi_1|_{U(Y)} = 0$, it follows from the properties of the cohomology multiplication (the same as used in the Lusternik--Schnirelmann category estimate) that $\xi_2|_{U(Z)} \neq 0$.
\end{proof}

\begin{proof}[Proof of Theorem~\ref{theorem:cup-case} over $\Z$]
We use induction. The base case $n=1$ follows from Theorem \ref{theorem:base-case} (or from the beginning of its proof).

The class $\xi_1$ corresponds to a classifying map
\[
X\to K(\mathbb Z, 1) = S^1 = \R/\Z.
\]
We pull back the covering map $\R\to S^1$ to a covering map $\pi: \widetilde{X} \to X$. This $\widetilde{X}$ has a free $\Z$-action with a bounded fundamental region $F$, $\widetilde{X} = \bigcup_{m\in\Z} mF$. We may also assume that $X$ and $\widetilde{X}$ are connected, or otherwise work in a connected component.

Let $Z\subset V(\widetilde{X})$ be a smallest set of vertices that ``separates $-\infty$ from $+\infty$'' in $\widetilde{X}$. That is, such set of vertices that for all sufficiently large $m$ any edge-path in $\widetilde{X}$ connecting a vertex in $mF$ to a vertex in $-mF$ contains a vertex in $Z$.

Note that we choose a finite $Z$ and therefore for sufficiently large positive integer $N$ the set $Z$ is not connected with any of its shifts by $kN$, for any nonzero $k\in\mathbb Z$. Consider $X_N := \widetilde{X}/N\mathbb Z$ and the natural projections $\pi_N : X_N \to X$ and $\pi'_N : \widetilde{X} \to X_N$. Choose $N$ sufficiently large and coprime with the order of $\xi_1\smile\dots\smile\xi_n$. Note that $\pi_N$ is an $N$-sheet cover and therefore $\pi_N^*(\xi_1\smile\dots\smile\xi_n) \neq 0$, since otherwise we would have
\[
N\xi_1\smile\dots\smile\xi_n = \tau(\pi_N^*(\xi_1\smile\dots\smile\xi_n)) = 0,
\]
where $\tau : H^*(X_N;\mathbb Z)\to H^*(X;\mathbb Z)$ denotes the transfer map.

The projection $\pi'_N$ induces an isomorphism of simplicial complexes $\langle Z\rangle \to \langle Z_N\rangle$ by the choice of $N$, where $Z_N := \pi'_N(Z)$. The subset $Z_N$ blocks any cycle in $X_N$ which could lift to a cycle connecting $-\infty$ and $+\infty$, as it follows from the separation property of $Z$. Therefore $\pi_N^*(\xi_1)$ is zero on $\langle V(X_N)\setminus Z_N \rangle$. By Lemma~\ref{lemma:MW}, $\pi_N^*(\xi_2)\smile\dots\smile\pi_N^*(\xi_n)$ is nonzero on $\langle Z_N\rangle$. From the isomorphism of simplicial complexes $\langle Z\rangle \cong \langle Z_N\rangle$, the product
\[
\pi^*(\xi_2)\smile\dots\smile\pi^*(\xi_n) \in H^{n-1}(Z;\mathbb Z)
\]
is also nonzero.

The projection of $B(x, r)\cap Z$ is contained in the corresponding $r$-ball $B(\pi(x),r)$ of $X$. By the choice of $r$, the classes $\pi^*(\xi_2),\ldots,\pi^*(\xi_n)$ vanish on $\langle B(x,r)\cap Z \rangle $. Hence the inductive assumption with $n$ replaced by $n-1$ applies to $Z$. Let $x\in Z$ be the point obtained from the inductive assumption.

Similar to the proof of Theorem \ref{theorem:base-case}, we have that for every $i+1 \leq r$
\[
|S(x,i+1)\setminus Z| \ge |B(x,i)\cap Z|.
\]
In the current setting this inequality can be improved.

Let $S^+(x,i+1)\subset S(x,i+1)\setminus Z$ be the subset of those vertices in $S(x,i+1)$ which are connected to $+\infty$, i.e., for which there exists an edge path in $\widetilde{X}$ not using any vertex of $Z$ and connecting them to a vertex in $mF$ for any sufficiently large $m$. Likewise, let $S^-(x,i+1)\subset S(x_1,i+1)\setminus Z$ be the subset of those vertices in $S(x,i+1)$ which are edge-connected to $-\infty$ in $\widetilde{X}\setminus Z$.

Remove $B(x, i)$ from $Z$ and add $S^+(x, i+1)$ instead. Denote the obtained vertex set by $Z'$. For $i+1\le r$, we have that $Z'$ still ``separates $-\infty$ from $+\infty$'' in $\widetilde{X}$. Indeed, any path from $-\infty$ to $+\infty$ in $\widetilde{X}$ not passing through $Z'$ has to pass through $Z\setminus Z' = Z\cap B(x, i)$. Then this path has to reach $+\infty$ from $B(x,i)$. When this path  passes through $S(x,i+1)$ for the last time, it does in fact pass through $S^+(x, i+1)$ by the definition of $S^+(x, i+1)$.

From the minimality of our choice of $Z$, we get that for any $i+1\le r$
\[
|S^+(x,i+1)| \ge |B(x,i)\cap Z|.
\]
Likewise,
\[
|S^-(x,i+1)| \ge |B(x,i)\cap Z|.
\]

We now apply the inductive assumption that bounds $|B(x,i)\cap Z|$ from below for all $i \leq r$ noting that each of these $B(x,i)$ projects bijectively onto $B(\pi(x), i)$ by the definition of the cohomology triviality radius. So, for all $i \leq r$ we have:

\begin{multline*}
|B(\pi(x), i)| = |B(x,i)| = |B(x,i)\cap Z| + \sum_{0\le j\le i}|S(x,j)\setminus Z| \ge \\
\ge |B(x,i)\cap Z| + \sum_{0\le j\le i}|S^+(x,j)| + \sum_{0\le j\le i}|S^-(x,j)| \ge \\
\ge \b_{n-1}(i) + 2\sum_{0\le j\le i-1}\b_{n-1}(j)=\b_n(i).
\end{multline*}
\end{proof}

\begin{proof}[Proof of Theorem \ref{theorem:cup-case} over $\Z_2$]
We use induction. The base case $n=1$ follows from Theorem \ref{theorem:base-case} (or from the beginning of its proof).

The class $\xi_1$ corresponds to a classifying map
\[
X\to K(\Z_2, 1) = \mathbb RP^\infty.
\]
We pull back the double covering map $S^\infty \to \mathbb RP^\infty$ to a double covering map $\pi: \widetilde X \to X$. This $\widetilde X$ has a free involution $\tau : \widetilde X\to \widetilde X$ We may also assume that $X$ and $\widetilde X$ are connected, or otherwise work in a connected component.

Let $Z\subset V(X)$ be a smallest set of vertices such that the covering map $\pi$ is trivial on $\langle V(X)\setminus Z\rangle$. Then $\xi_1$ is zero over $\langle V(X)\setminus Z\rangle$ and the restriction of the product $\xi_2\smile\dots\smile\xi_n$ is nonzero over $\langle Z\rangle$ by Lemma \ref{lemma:MW}.

Any metric $r$-ball of $\langle Z\rangle$ is contained in the corresponding $r$-ball of $X$. By the cohomology triviality radius assumption on $X$, we have that the classes $\xi_2,\ldots,\xi_n$ vanish on any $r$-ball of $\langle Z\rangle$. Hence the inductive assumption with $n$ replaced by $n-1$ applies to $\langle Z\rangle$. Let $x\in Z$ be the point obtained from the inductive assumption and put $Y=V(X)\setminus Z$.

Similar to the proof of Theorem \ref{theorem:base-case}, we have that for every $i+1 \leq r$
\[
|S(x,i+1)\setminus Z| \ge |B(x,i)\cap Z|,
\]
but we are going to improve this bound.

Let $\widetilde Z$ be the lift of $Z$ to $\widetilde X$. Let the lift of $\langle S(x,i+1)\rangle$ to $\widetilde{X}$ consists of two disconnected copies of $\langle S(x,i+1)\rangle$ from the cohomology triviality assumption, let them be $\widetilde S$ and $\tau\widetilde S$. The same applies to the lift of $\langle Y\rangle$, it consists of two disconnected copies $\langle \widetilde Y\rangle, \langle\tau\widetilde Y\rangle\subset V(\widetilde X)$. Put
\[
\widetilde S^+ = (\widetilde S \cap \widetilde Y) \cup (\tau\widetilde S\cap \tau\widetilde Y).
\]
The components of $\langle \widetilde S^+\rangle$ in the above union formula are disconnected and therefore $\langle \widetilde S^+\rangle$ is a trivial double cover over $\langle S^+\rangle$, for the corresponding subset $S^+\subset S(x,i+1)\cap Y$. Analogously,
\[
\widetilde S^- = (\widetilde S \cap \tau\widetilde Y) \cup (\tau\widetilde S\cap \widetilde Y)
\]
generates $\langle S^-\rangle$, which is a trivial double cover over $\langle S^-\rangle$ for the corresponding $S^-\subset S(x,i+1)\cap Y$. Note that $S^+\cap S^- =\emptyset$.

Remove $B(x, i)$ from $Z$ and add $S^+$ instead. Denote the obtained vertex set by $Z'$. The new complement $Y'$ then equals $Y\cup B(x,i)\setminus S^+$. We need to show that $Z'$ and $Y'$ may serve as $Z$ and $Y$. For this, it is sufficient to assume that $\xi_1$ evaluates to $1$ on a loop $\gamma$ passing through the vertices of $Y'$ and obtain a contradiction.

If $\gamma$ does not touch $B(x, i)$ then it is fully contained in $Y$ and the contradiction is obtained by the choice of $Y$. Otherwise let $\gamma$ start and end in $B(x,i)$. Then its lift $\widetilde\gamma$ starts in $\widetilde B$ and ends in $\tau\widetilde B$, where $\widetilde B$ is a component of the lift of $B(x,i)$ to $\widetilde X$ corresponding to $\widetilde S$.

The path $\widetilde \gamma$ will need to leave $\widetilde B\cup \widetilde S$ through $\widetilde S^-$ (since $\gamma$ did not touch $S^+$) and hence it gets into $\tau\widetilde Y$ after this by the definition of $\widetilde S^-$. At some point it has to touch $\tau\widetilde S$ and, since it passes in $\tau\widetilde Y$, this will happen in $\widetilde S^+$. Hence $\gamma$ touches $S^+$, a contradiction.

From the minimality of our choice of $Z$, we now get that for any $i+1\le r$
\[
|S^+| \ge |B(x,i)\cap Z|.
\]
Similarly,
\[
|S^-| \ge |B(x,i)\cap Z|,
\]
and in total
\[
|S(x,i+1)\setminus Z| \ge 2|B(x,i)\cap Z|,
\]

We now apply the inductive assumption that bounds $|B(x,i)\cap Z|$ from below for all $i \leq r$ and obtain for any $i \leq r$:

\begin{multline*}
|B(x,i)| = |B(x,i)\cap Z| + \sum_{0\le j\le i}|S(x,j)\setminus Z| \ge \\
\ge |B(x,i)\cap Z| + 2\sum_{0\le j\le i-1}|B(x,j)\cap Z|
\ge \b_{n-1}(i) + 2\sum_{0\le j\le i-1}\b_{n-1}(j)=\b_n(i).
\end{multline*}
\end{proof}

\begin{proof}[Proof of Theorem \ref{theorem:a-la-gromov-cup}]
Put $r = \left\lfloor\frac{\sys X}{2} \right\rfloor - 1$. The restriction of any $\xi_i$ to a metric ball of $X$ of radius $r$ is inessential, since such balls only contain contractible loops.

By Theorem \ref{theorem:cup-case} we find one such ball $B_n$ with at least $\b_n(r)$ vertices. The restriction of $\xi_1\smile\dots\smile\xi_{n-1}$ to the complex $\langle V(X)\setminus B_n \rangle$ is nonzero by Lemma \ref{lemma:MW}. By Theorem \ref{theorem:base-case-gen}, we find a ball of this subcomplex with $b_{n-1}(r)$ vertices, and so on. Altogether, we exhibit at least
\[
\sum_{k=0}^n \b_k(r) \ge \sum_{k=0}^n 2^k \binom{r}{k} \ge 2^n \binom{r}{n} = 2^n \binom{\left\lfloor\frac{\sys X}{2} \right\rfloor - 1}{n}.
\]
vertices of $X$.
\end{proof}

\begin{proof}[Proof of Theorem \ref{theorem:upper-bound}]
For every $n$, we construct a $\mathbb Z_2$-symmetric triangulation $X_n$ of the sphere $S^n$ with no more than $2s^n$ vertices and with the edge-distance between the opposite vertices at least $s$. The $\mathbb Z_2$-quotient of this complex has the desired properties.

We proceed by induction over $n$. For $n=1$, let $X_1$ be the polygon with $2s$ vertices, it evidently does the job.

Suppose we have already constructed $X_n$. Consider $(s-1)$ copies of $X_n$ which we call \emph{layers}. For each $i$ add a cylinder $X_n\times [0,1]$ with $X_n\times \{0\}$ and $X_n\times \{1\}$ identified with the $i\textsuperscript{th}$ and the $(i+1)\textsuperscript{th}$ layers respectively. Add cones over the first and the last layer with apexes $S$ and $N$ respectively. The resulting space is homeomorphic to $S^{n+1}$. The $\Z_2$ symmetry sends a vertex $v$ in the $i\textsuperscript{th}$ layer to the vertex $-v$ in the $(s-i)\textsuperscript{th}$ layer, where $-v$ is the vertex symmetric to $v$ in $X_n$. The north pole $N$ is sent to the south pole $S$ and vice versa.

It remains to triangulate the cylinders $X_n\times [0,1]$ between the layers. Assign a unique integer label to each vertex of $X_n$ so that the integers assigned to opposite vertices have the same absolute value but different sign. This way the vertices of $X_n$ are ordered according to their labels. For every top-dimensional simplex $\Delta\subset X_n$, triangulate $\Delta\times [0,1]$ in each cylinder according to the order of vertices of $\Delta$: the $i\textsuperscript{th}$ top-dimensional simplex of $\Delta\times [0,1]$ has the first $i$ vertices of $\Delta\times \{0\}$ and the last $(n+2-i)$ vertices of $\Delta\times \{1\}$ as its vertices. Denote the resulting triangulation by $X_{n+1}$.

It is easy to see that $X_{n+1}$ is symmetric and has not more than $(s-1)2s^n+2\leq 2s^{n+1}$ vertices. Its vertices are only connected by an edge to the vertices in the adjacent layers, and if a vertex $v$ in the $i\textsuperscript{th}$ layer is connected by an edge with a vertex $u$ in the $(i+1)\textsuperscript{th}$ layer then $v$ and $u$ were connected by an edge in $X_n$ as well. We obtain that
\begin{itemize}
\item
an edge-path between a vertex in some $i\textsuperscript{th}$ layer and its opposite in the $(s-i)\textsuperscript{th}$ layer, not passing through the poles, has length at least $s$;
\item
an edge-path between the poles has length at least $s$, because there are $(s-1)$ layers between the poles;
\item
an edge path between arbitrary vertices passing through a pole has length at least $s$, since the union of this path and its antipodal image is a closed path passing through both poles, whose length must be at least $2s$.
\end{itemize}
\end{proof}

\section{Minkowski principle for Riemannian polyhedra}
\label{section:riemannian-systole}

We prove inductively a certain technical version of Theorem~\ref{theorem:nabutovsky}. To state it, it is convenient to introduce the function showing up in the volume bound. For fixed numbers $0 \le L_1 \le \ldots \le L_n$, define the following monotone continuous function:
\[
V_n(r; L_1, \ldots, L_n) :=
\begin{cases}
\frac{(2r)^n}{n!} & \mbox{if } 0 < r \le L_1/2, \\
\frac{L_1(2r)^{n-1}}{n!} & \mbox{if } L_1/2 < r \le L_2/2, \\
\cdots & \\
\frac{L_1 \ldots L_{n-2}(2r)^{2}}{n!} & \mbox{if } L_{n-2}/2 < r \le L_{n-1}/2, \\
\frac{L_1 \ldots L_{n-1} 2r}{n!} & \mbox{if } L_{n-1}/2 < r \le L_n/2, \\
\frac{L_1 \ldots L_n}{n!} & \mbox{if } r > L_n/2.
\end{cases}
\]

\begin{lemma}
\label{lemma:technical}
For any $t\ge 0$ and any $0\le L_1\le L_2\le \dots \le L_n$ the following inequality holds
\[
2 \int_{0}^r V_{n-1}(t; L_1, \ldots, L_{n-1}) \;dt \ge V_n(r; L_1, \ldots, L_{n}).
\]
\end{lemma}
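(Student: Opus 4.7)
The plan is to reduce the inequality to a piecewise comparison of derivatives. Set
\[
f(r) := 2 \int_{0}^r V_{n-1}(t; L_1, \ldots, L_{n-1}) \, dt, \qquad g(r) := V_n(r; L_1, \ldots, L_n).
\]
Both functions are continuous on $[0,\infty)$ with $f(0) = g(0) = 0$, and both are piecewise polynomial with breakpoints contained in $\{L_1/2, \ldots, L_n/2\}$. Hence it suffices to establish $f'(r) \ge g'(r)$ at every point where both derivatives exist (i.e.\ away from the finitely many breakpoints), and then integrate from $0$.

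The verification is a case analysis on which interval $r$ lies in. Write $L_0 := 0$ and consider the intervals $(L_{k-1}/2, L_k/2)$ for $k = 1, \ldots, n$, plus the unbounded piece $(L_n/2, \infty)$. On the $k$-th bounded piece with $k \le n-1$ one has the explicit expressions
\[
f'(r) = \frac{2 L_1 \cdots L_{k-1} (2r)^{n-k}}{(n-1)!}, \qquad g'(r) = \frac{2(n-k+1) L_1 \cdots L_{k-1} (2r)^{n-k}}{n!},
\]
so $f'(r)/g'(r) = n/(n-k+1) \ge 1$, with equality exactly when $k = 1$. On the last bounded piece $k = n$ one finds $f'(r)/g'(r) = n$, and on $(L_n/2, \infty)$ we have $g'(r) = 0$ while $f'(r) = 2 L_1 \cdots L_{n-1}/(n-1)! \ge 0$. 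In every case $f' \ge g'$.

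Combining this with $f(0) = g(0) = 0$ gives $f(r) \ge g(r)$ for all $r \ge 0$, which is exactly the desired inequality. The only mildly delicate point is bookkeeping at the last piece $k=n$: there $V_{n-1}$ is constant while $g$ is still linear (with slope the $n$-fold derivative contribution), and one must notice that the ratio becomes $n$ rather than $n/(n-k+1)$; no singular behaviour occurs because $g'$ vanishes precisely where $V_{n-1}$ stops depending on $r$. There is no serious obstacle — the argument is a direct computation once one fixes the right piecewise framework.
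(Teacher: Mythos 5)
Your proof is correct. You verify $f(0)=g(0)=0$ and then compare derivatives piecewise, obtaining $f'(r)/g'(r) = n/(n-k+1) \ge 1$ on the $k$-th interval (a formula which, incidentally, also applies at $k=n$, giving $n$, so your closing caveat about that piece being an exception is unnecessary). The paper instead argues by induction on the interval index $i$: it carries forward the bound $2\int_0^{L_i/2} V_{n-1} \ge V_n(L_i/2)$ and adds the explicit antiderivative over $(L_i/2, r]$, reducing to the factorial comparison $(n-1)!(n-i) \le n!$. The two arguments are close cousins — differentiating the paper's segmentwise integral inequality recovers exactly your $f'\ge g'$ — but yours replaces the explicit telescoping induction with a one-shot fundamental-theorem argument, which is a little cleaner to state, while the paper's version keeps the exact accumulated value $V_n(L_i/2) = L_1\cdots L_{i-1}L_i^{n-i+1}/n!$ visible at each stage, which can be handy if one wants to track where the slack in the inequality lives.
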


\begin{proof}
If $r \le L_1/2$,
\[
2 \int_{0}^r V_{n-1}(t; L_1, \ldots, L_{n-1}) \;dt = 2 \int_{0}^r \frac{(2t)^{n-1}}{(n-1)!} \; dt = \frac{(2r)^{n}}{n!} = V_n(r; L_1, \ldots, L_{n}).
\]

If $r \in (L_i/2, L_{i+1}/2]$, then inducting in $i$ we can assume that
\[
2 \int_{0}^{L_i/2} V_{n-1}(t; L_1, \ldots, L_{n-1}) \;dt \ge V_n(L_i/2; L_1, \ldots, L_{n}) = \frac{L_1 \ldots L_{i-1} L_i^{n-i+1}}{n!}.
\]
In order to get $2 \int_{0}^r V_{n-1}(t; L_1, \ldots, L_{n-1}) \;dt \ge V_n(r; L_1, \ldots, L_{n})$, we need the following inequality:
\[
 2 \int_{L_i/2}^r V_{n-1}(t; L_1, \ldots, L_{n-1}) \;dt \ge \frac{L_1 \ldots L_i ((2r)^{n-i} - L_i^{n-i})}{n!}.
\]
The left-hand side evaluates to
\[
 2 \int_{L_i/2}^r \frac{L_1 \ldots L_{i} (2t)^{n-i-1}}{(n-1)!} \; dt = \frac{L_1 \ldots L_i ((2r)^{n-i} - L_i^{n-i})}{(n-1)!(n-i)},
\]
thus proving the bound.

For $r > L_n/2$, the inequality follows from the fact that $V_n(r; L_1, \ldots, L_{n})$ remains constant.
\end{proof}

\begin{theorem}
\label{theorem:nabutovsky-technical}
Let $X$ be a compact Riemannian polyhedron of dimension $n$. Suppose it is $n$-cup-essential over $\Z$  or over $\Z_2$, which is witnessed by the degree $1$ cohomology classes $\xi_1,\ldots,\xi_n$, $\xi_1\smile\dots\smile\xi_n \neq 0$. Suppose that there are real numbers $0 < L_1 \le \dots \le L_n$ such that the class $\xi_i$ vanishes when restricted to a ball of radius less than $L_i/2$, for all $i$. Let $0 < \rho < L_1/2$, $0 < \epsilon < 1$ be any real numbers. Then there exists a point $x \in X$ such that $\vol_n B(x, r) \ge (1-\epsilon)V_n(r; L_1, \ldots, L_n)$ for all $r \ge \rho$.
\end{theorem}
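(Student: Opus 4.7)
The plan is to prove Theorem~\ref{theorem:nabutovsky-technical} by induction on $n$, transplanting the strategy behind Theorem~\ref{theorem:cup-case} into the Riemannian polyhedral setting. Summations in the discrete proof are replaced by integrals via the coarea formula, the minimal vertex separator is replaced by a minimal $(n-1)$-volume hypersurface, and Lemma~\ref{lemma:technical} plays the role of the recursive volume inequality; the cohomological trimming of Lemma~\ref{lemma:MW} transfers essentially verbatim.

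For the base case $n = 1$, I would pick a non-contractible loop $\gamma \subset X$ with $\xi_1(\gamma) \neq 0$ and length at most $L_1/(1-\epsilon)$, and take $x \in \gamma$. For any $r \ge \rho$, the arc of $\gamma$ contained in $B(x, r)$ has length at least $\min(2r, \ell(\gamma))$, which already yields $\vol_1 B(x, r) \ge (1 - \epsilon) V_1(r; L_1)$ directly.

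For the inductive step, I would slice using the class $\xi_n$ (the one with the largest $L$-value, matching the $V_{n-1}(t; L_1, \dots, L_{n-1})$ on the right-hand side of Lemma~\ref{lemma:technical}). Its classifying map to $K(\Z,1) = S^1$ (respectively $K(\Z_2,1) = \mathbb R P^\infty$) pulls back a $\Z$-cover (respectively double cover) $\pi : \widetilde X \to X$; passing to a finite cover $X_N = \widetilde X / N\Z$ with $N$ large and coprime to the order of $\xi_1 \smile \cdots \smile \xi_n$, exactly as in the proof of Theorem~\ref{theorem:cup-case}, I would select a minimum $(n-1)$-volume Lipschitz integral cycle $\Sigma \subset \widetilde X$ separating the two ends (respectively exchanged by the covering involution). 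The cohomological argument of Theorem~\ref{theorem:cup-case} via Lemma~\ref{lemma:MW} then forces $\pi^*(\xi_1 \smile \cdots \smile \xi_{n-1})$ to restrict non-trivially to $\Sigma$. Since loops on $\Sigma$ are loops on $X$, the restricted classes $\xi_i|_\Sigma$ have $L$-values at least $L_i$ for $i = 1, \dots, n-1$ and inherit the triviality radius hypothesis automatically, so the inductive hypothesis applied to $\Sigma$ (with auxiliary $\epsilon' \ll \epsilon$ and $\rho' \ll \rho$) furnishes a point $x \in \Sigma$ with $\vol_{n-1}(B(x,t) \cap \Sigma) \ge (1 - \epsilon') V_{n-1}(t; L_1, \dots, L_{n-1})$ for all $t \ge \rho'$.

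Minimality of $\Sigma$ enters through the standard swap argument: for $r < L_n/2$, the ball $B(x, r) \subset \widetilde X$ projects isomorphically to $B(\pi(x), r) \subset X$, and replacing $\Sigma \cap B(x, t)$ by either side $S^{\pm}(x, t) \setminus \Sigma$ produces another valid separator, so minimality gives $\vol_{n-1}(S^{\pm}(x, t) \setminus \Sigma) \ge \vol_{n-1}(\Sigma \cap B(x, t))$ and hence $\vol_{n-1}(S(x, t) \setminus \Sigma) \ge 2 \vol_{n-1}(\Sigma \cap B(x, t))$. Combining with the coarea formula and Lemma~\ref{lemma:technical} yields
\[
\vol_n B(\pi(x), r) \ge 2\int_0^r \vol_{n-1}(\Sigma \cap B(x, t)) \, dt \ge 2(1-\epsilon')\int_0^r V_{n-1}(t; L_1, \dots, L_{n-1}) \, dt \ge (1-\epsilon) V_n(r; L_1, \dots, L_n),
\]
once $\epsilon'$ is small enough to absorb the sub-$\rho'$ contribution, and the range $r \ge L_n/2$ is covered by monotonicity of both sides. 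The main obstacle I expect is making the minimal separator rigorous in the Riemannian polyhedral setting: existence, mild regularity, and the swap inequality are standard for smooth manifolds but must be redone in the class of Lipschitz integer $(n-1)$-cycles (following \cite{papasoglu2019,nabutovsky2019}). The $\epsilon$-bookkeeping across the induction is routine in comparison.
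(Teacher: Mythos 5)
Your overall strategy is the same as the paper's (slice with $\xi_n$, pass to a finite cyclic cover coprime to the torsion order, run Lemma~\ref{lemma:MW} on a separator, apply induction, do the two-sided swap, integrate by coarea, finish with Lemma~\ref{lemma:technical}), and your $\epsilon$-bookkeeping sketch is essentially what the paper does (though the paper fixes $\hat\rho = \rho(\epsilon/2)^{1/n}$ and $\hat\epsilon = \epsilon/4$ explicitly, since $\rho'$ must scale as $\rho\cdot\epsilon^{1/n}$ rather than just be ``much smaller''). But there is one genuine gap: you posit a \emph{minimum}-volume separating Lipschitz integral cycle $\Sigma$ and then rely on its exact minimality for the swap inequality $\vol_{n-1}(S^{\pm}\setminus\Sigma)\ge\vol_{n-1}(\Sigma\cap B(x,t))$. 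Existence and regularity of such a minimizer in a Riemannian polyhedron, and membership of the swapped competitor in the minimizing class, are nontrivial GMT questions, and you flag them as the main obstacle, suggesting they be resolved by importing machinery from \cite{papasoglu2019,nabutovsky2019}. The paper's key move, which you miss, is that none of this is needed: one takes the \emph{infimum} $v$ of $(n-1)$-volumes over bounded separating \emph{subpolyhedra} (a class in which candidates trivially exist, e.g., the boundary of a fundamental domain), picks one with $\vol_{n-1} Z < v+\delta$, and absorbs the resulting $\delta$-defect into the $\epsilon$-budget by choosing $\delta = \tfrac{\epsilon}{4}V_{n-1}(\rho)$; the swap inequality then reads $\vol_{n-1} S^{+} > (1-\hat\epsilon)V_{n-1}(r)-\delta$, which suffices. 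The only residual regularity needed is that distance spheres $S(x,r)$ are subpolyhedra for almost all $r$, achievable by a small perturbation of the distance function. In short: the theorem is phrased with an $\epsilon$ \emph{precisely so that} one can avoid exact minimizers altogether, and that is the piece of the argument your proposal does not supply.
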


\begin{proof}[Proof in the integral case]
If $n=1$, take a point $x$ on a shortest loop on which $\xi_1$ evaluates non-trivially. Then $B(x,r)$ intersects this loop along a curve of length at least $2r$ for $r \le L(\xi_1)/2$, and the statement follows.

Now assume $n>1$. For brevity, we assume fixed values of $L_1,\ldots, L_n$ and write
\[
V_n(r) = V_n(r; L_1, \ldots, L_n)\quad\text{and}\quad V_{n-1}(r) = V_{n-1}(r; L_1, \ldots, L_{n-1}).
\]

The class $\xi_n$ is classified by a map $X\to K(\mathbb Z, 1) = \R/\Z$, which gives rise to a covering map $\pi: \widetilde{X} \to X$.
The action of $\Z$ on $\widetilde{X}$ allows us to speak about ``separating $-\infty$ from $+\infty$'' in $\widetilde{X}$, as in the proof of Theorem~\ref{theorem:cup-case}.

Consider all bounded $(n-1)$-dimensional subpolyhedra\footnote{A \emph{subpolyhedron} is a subspace admitting the structure of a simplicial complex whose cells are embedded smoothly in the ambient polyhedron. The Riemannian metric is inherited from the ambient polyhedron, allowing one to measure intrinsic distances and volumes.} in $\widetilde{X}$, separating $-\infty$ from $+\infty$. For example, the boundary of any reasonable fundamental domain of the $\Z$-action is such a subpolyhedron.
Let $v$ be the infimum of their $(n-1)$-volumes. Pick such a subpolyhedron $Z$ with $\vol_{n-1} < v + \delta$, where $\delta>0$ is a small number to be specified later. Consider $X_N : = \widetilde{X}/N\mathbb Z$ where $N$ is a positive integer satisfying two properties:
\begin{itemize}
  \item $N$ is sufficiently large, so that the projection $\pi'_N : \widetilde{X} \to X_N$ induces an homeomorphism between $Z$ and $Z_N := \pi'_N(Z)$;
  \item $N$ is coprime with the order of $\xi_1\smile\dots\smile\xi_n$, so that its pullback under the projection $\pi_N : X_N \to X$ is non-zero.
\end{itemize}

Just like in the proof of Theorem~\ref{theorem:cup-case}, $\pi_N^*(\xi_n)$ vanishes on $X_N \setminus Z_N$. By the topological version of Lemma \ref{lemma:MW}, $\pi_N^*(\xi_1)\smile\dots\smile\pi_N^*(\xi_{n-1})$ restricts non-trivially to an arbitrarily small neighborhood of $Z_N$ in $X_N$; taking this neighborhood sufficiently small so that it retracts to $Z_N$, we see that $\pi_N^*(\xi_1)\smile\dots\smile\pi_N^*(\xi_{n-1})$ restricts non-trivially to $Z_N$.
Since $Z \simeq Z_N$, we have a long non-zero cohomology product in $H^*(Z;\mathbb Z)$, and we are in position to apply the induction hypothesis for $Z$.

Apply the induction hypothesis to $Z$ with modified $\hat\rho = \rho (\epsilon/2)^{1/n}$ and $\hat\epsilon = \epsilon/4$. It outputs a point $x \in Z$ such that $\vol_{n-1} B_{Z}(x, r) \ge (1-\hat\epsilon)V_{n-1}(r)$ for all $r \ge \hat\rho$. Here $B_{Z}(\cdot)$ denotes a metric ball in the intrinsic metric of $Z$, whereas the notation $B(\cdot)$ and $S(\cdot)$ will refer to $\widetilde{X}$. The intrinsic distances dominate the extrinsic ones, so we have
\[
\vol_{n-1} (B(x, r) \cap Z) \ge (1-\hat\epsilon)V_{n-1}(r) \quad \text{for all } r \ge \hat\rho.
\]

We now show that
\begin{equation}
\label{equation:star}
\vol_{n-1} S(x, r) \ge 2\left(1-\frac{\epsilon}{2}\right) V_{n-1}(r) \quad \text{for almost all } r \ge \hat\rho.\tag{$\star$}
\end{equation}

We only consider those $r$ for which $S(x, r)$ is a subpolyhedron. We can assume this is true for almost all $r$, perturbing the distance function slightly. Fix any such $r \ge \hat\rho$. Introduce $S^+ \subset S(x, r)$ (respectively, $S^-$) as the subset of points connected to $+\infty$ (respectively, $-\infty$) in $\widetilde{X} \setminus Z$. The set $Z \setminus (\mbox{int } B(x,r) \cap Z) \cup S^+$ still separates $-\infty$ from $+\infty$, hence we have a volume bound:
\[
v \le \vol_{n-1}\left( Z \setminus (\mbox{int } B(x,r) \cap Z) \cup S^+ \right) < v + \delta - (1-\hat\epsilon)V_{n-1}(r) + \vol_{n-1} S^+.
\]
Therefore, taking $\delta = \frac{\epsilon}{4} V_{n-1}(\rho)$, we obtain
\[
\vol_{n-1} S^+ > (1-\hat\epsilon)V_{n-1}(r) - \delta \ge \left(1-\frac{\epsilon}{2}\right)V_{n-1}(r).
\]
A similar bound can be established for $\vol_{n-1} S^-$, and adding those up we obtain~\eqref{equation:star}.

Now that we have \eqref{equation:star}, we can integrate it over $[\hat\rho, r]$ and use the coarea inequality:

\begin{multline*}
\vol_{n-1} B(x, r) \ge \int_{\hat\rho}^{r} \vol_{n-1} S(x, t) \; dt
\ge 2\left(1-\frac{\epsilon}{2}\right) \int_{\hat\rho}^{r} V_{n-1}(t) \; dt > \\
> 2\left(1-\frac{\epsilon}{2}\right) \int_{0}^{r} V_{n-1}(t) \; dt - 2 \int_{0}^{\hat\rho} V_{n-1}(t) \; dt \ge \\
\overset{\text{Lem.~\ref{lemma:technical}}}\ge \left(1-\frac{\epsilon}{2}\right) V_n(r) - \frac{(2 \hat\rho)^n}{n!}
\ge (1-\epsilon) V_n(r),
\end{multline*}
where the last inequality holds for all $r \ge \rho$ by the choice of $\hat\rho = \rho (\epsilon/2)^{1/n}$. For $r < L_n/2$, the projection $\pi$ sends $B(x, r)$ to $X$ isometrically, so we obtain the desired volume bound in $X$ for the radii in the range $[\rho, L_n/2]$. For $r \ge L_n/2$ the bound holds vacuously since $V_n(r)$ becomes constant.
\end{proof}

\begin{proof}[Proof in the mod $2$ case]
The strategy is the same as in the discrete case and the volumetric computations are the same as in the proof over $\Z$, so we only briefly sketch the argument.

The base case $n=1$ is done as before. Suppose that $n>1$ and that $X$ is connected. The class $\xi_n$ corresponds to a classifying map $X\to \mathbb RP^\infty$, along which we pull back the double covering map $S^\infty \to \mathbb RP^\infty$. This way we obtain a covering map $\pi: \widetilde X \to X$, with a free involution $\tau : \widetilde X\to \widetilde X$.

Consider all bounded $(n-1)$-dimensional subpolyhedra of $X$ such that $\pi$ restricts to a trivial cover over the complement $X\setminus Z$. Let $v$ be the infimal volume of such subpolyhedra. Pick a subpolyhedron $Z$ among those with the volume less than $v+\delta$, where $\delta$ is chosen as in the integral case. Just like in the discrete case, the product $\xi_1\smile\dots\smile\xi_{n-1}$ restricts non-trivially to $Z$, and one can apply the inductive assumption for $Z$, with parameters $\hat\rho$ and $\hat\epsilon$ chosen as in the integral case.

The induction hypothesis outputs a point $x \in Z$ such that $\vol_{n-1} (B(x, r) \cap Z) \ge (1-\hat\epsilon)V_{n-1}(r)$ for all $r \ge \hat\rho$. The surgery on $Z$ that replaces $B(x, r) \cap Z$ by $S(x, r)$ produces a subpolyhedron of volume at least $v$, and this gives a lower bound for $\vol_{n-1} S(x, r)$. To obtain a bound that is twice as good, we do the surgery more carefully. Similarly to the discrete case, one introduces non-overlapping subsets $S^+$ and $S^-$ of $S(x, r)$ in a way that makes $\xi_n$ vanish on the complement of $Z \setminus (\mbox{int } B(x,r) \cap Z) \cup S^+$ and on the complement of $Z \setminus (\mbox{int } B(x,r) \cap Z) \cup S^-$. The surgery on $Z$ that replaces $B(x, r) \cap Z$ by $S^+$ or $S^-$ produces a subpolyhedron of volume at least $v$, this gives lower bounds on $\vol_{n-1} S^+$ and $\vol_{n-1} S^-$, which we integrate and get the desired bound on $\vol_n B(x,r)$. The computations carry over from the integral case \emph{verbatim}.
\end{proof}

\begin{lemma}
\label{lemma:curve-factoring}
Let $\xi \in H^1(X; \Z)$ or $\xi \in H^1(X; \Z_2)$, and let $r < L(\xi)/2$. Then $\xi$ restricts to any ball of radius $r$ trivially.
\end{lemma}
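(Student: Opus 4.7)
The plan is to reduce the cohomological vanishing to a statement about loops, and then decompose each loop in $B(x, r)$ into short loops whose length falls below the threshold $L(\xi)$. First I would translate $\iota^*\xi = 0$ into loop-based language: since the ball $B(x, r)$ has the homotopy type of a CW complex with free $H_0$, the universal coefficient theorem gives $H^1(B(x, r); A) \cong \mathrm{Hom}(H_1(B(x, r)); A)$ for $A = \Z$ and $A = \Z_2$, so $\iota^*\xi$ vanishes if and only if $\xi$ evaluates to zero on every loop $\gamma$ contained in $B(x, r)$, viewed as a class in $H_1(X)$.

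The key construction is a subdivision trick. Given such a $\gamma$ (which we may take rectifiable), parametrize it by arc length and break it into arcs $\gamma_1, \ldots, \gamma_k$ by cut points $y_0, \ldots, y_k = y_0$ with each $|\gamma_i| < L(\xi) - 2r$; this is possible because $L(\xi) - 2r > 0$ by hypothesis. For each $y_i$ pick a minimizing path $p_i$ in $X$ from the center $x$ to $y_i$, of length at most $r$, setting $p_k := p_0$. Form the based loops $\alpha_i := p_{i-1} \cdot \gamma_i \cdot \bar{p}_i$ at $x$; each has length at most $2r + |\gamma_i| < L(\xi)$. By the very definition of $L(\xi)$, each $\alpha_i$ satisfies $\xi(\alpha_i) = 0$.

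Finally I would assemble the pieces: the concatenation $\alpha_1 \cdots \alpha_k$ simplifies, via the cancellations $\bar{p}_i \cdot p_i \simeq \mathrm{const}$, to the conjugate loop $p_0 \cdot \gamma \cdot \bar{p}_0$ based at $x$, which is freely homotopic to $\gamma$ and hence represents the same class in $H_1(X)$. Additivity of $\xi$ on concatenations of based loops yields $\xi(\gamma) = \xi(\alpha_1 \cdots \alpha_k) = \sum_i \xi(\alpha_i) = 0$. I do not expect a real obstacle here; the only point deserving care is the universal-coefficients reduction at the start, which is standard for Riemannian polyhedra but should be stated explicitly.
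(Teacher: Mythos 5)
Your argument is correct and is essentially the paper's: both decompose an arbitrary loop in the ball into short arcs, cone each arc to the center by geodesics of length at most $r$ to form loops of length below $L(\xi)$, and sum the resulting vanishing evaluations. You spell out the universal-coefficients reduction to loops and the explicit arc-length bound $|\gamma_i| < L(\xi) - 2r$ that the paper leaves implicit, but the underlying idea is the same.
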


\begin{proof}
Take an arbitrary loop $\gamma$ inside a ball of radius $r$. Introduce many points $p_i$ along the loop, and consider connect $x$ to $p_i$ by a geodesic segment $g_i$ of length at most $r$. For each $i$, consider the triangular curve $\gamma_i$ glued of $g_i$, $g_{i+1}$, and the short path between $p_i$ and $p_{i+1}$ along $\gamma$. If the $p_i$ are scattered densely enough, the length of each $\gamma_i$ is less than $L(\xi)$, hence $\xi(\gamma_i) = 0$. But when oriented appropriately, the $\gamma_i$ add up to $\gamma$; therefore, $\xi(\gamma) = 0$.
\end{proof}

\begin{proof}[Proof of Theorem~\ref{theorem:nabutovsky}]
It follows from Lemma~\ref{lemma:curve-factoring} that one can apply Theorem~\ref{theorem:nabutovsky-technical} with $L_1 = L(\xi_1), \ldots, L_n = L(\xi_n)$ (rearranging the $\xi_i$ if necessary).


We would like to set $\epsilon = 0$ and $\rho = 0$ in the conclusion of Theorem~\ref{theorem:nabutovsky-technical}. This will be done using the continuity of the function $v(x,r) := \frac{\vol_n B(x,r)}{V_n(r)}$.

First, we fix $\rho$ and get rid of $\epsilon$. Consider the function $v(x) = \min\limits_{r \in [\rho, R]} v(x,r)$. We know that for any $\epsilon$ there is $x$ such that $v(x) \ge (1-\epsilon)$. Since $v(\cdot)$ is upper semi-continuous and $X$ is compact, we conclude that there is $x$ such that $v(x) \ge 1$.

Now we get rid of $\rho$. Assume that for any $x$ there exists $0 < r \le R$ such that $v(x,r) < 1$. Consider the function $t(x) = \sup \left\{r ~\middle\vert~ v(x,r) < 1\right\}$. By our assumption, $t(\cdot)$ is defined everywhere and positive. It can be verified that $t(\cdot)$ is lower semi-continuous, so it attains a positive minimum on $X$. Taking $\rho$ less than this minimum, we get a contradiction with the result of the previous paragraph. Therefore, there is $x$ such that $v(x,r) \ge 1$ for all $r \in (0,R]$.
\end{proof}

\appendix
\setcounter{secnumdepth}{0}
\renewcommand{\thesection}{A}
\section{Appendix: definitions of essentiality}
\label{section:comparison}
\setcounter{theorem}{0}

The statements of this section are not used in the proofs of the results listed in the introduction. But they may be useful to put the results in a wider context of systolic inequalities.


\begin{lemma}
\label{lemma:essential}
The two definitions of $n$-essential for a topological space $X$ and a positive integer $n$ are equivalent:

\begin{enumerate}[label=(\roman*)]
  \item The classifying map $f : X\to K(\pi_1(X), 1)$ cannot be deformed to the $(n-1)$-skeleton of $K(\pi_1(X), 1)$.
  \item $X$ cannot be covered by $n$ or fewer open sets $U_i\subseteq X$ so that for every connected component $C$ of every $U_i$ and every connected component $D$ of $X$ the map $\pi_1(C)\to \pi_1(D)$ is trivial (in this case we call $U_i$ \emph{inessential}).
\end{enumerate}
%
\end{lemma}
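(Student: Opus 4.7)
The plan is to prove both implications by exploiting the interplay between open covers and CW skeleta, with the asphericity of $K := K(\pi_1(X), 1)$ as the key tool: since $\pi_k(K) = 0$ for $k \ge 2$, continuous maps into $K$ are determined up to homotopy by their effect on $\pi_1$ (on each connected component), and obstructions to extending maps into $K$ over cells of dimension $\ge 2$ vanish automatically. This is essentially the classical Ganea--\v{S}varc correspondence from Lusternik--Schnirelmann category theory, specialized to the aspherical target $K$.

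For (i) $\Rightarrow$ (ii), I would first homotope $f$ to a map $f' : X \to K^{(n-1)}$. Any $(n-1)$-dimensional CW complex admits a Ganea-type open cover by $n$ sets $V_0, V_1, \ldots, V_{n-1}$, where $V_k$ is a small open thickening of the disjoint union of the open $k$-cells, chosen fine enough so that each connected component of $V_k$ deformation-retracts onto a single open $k$-cell and is therefore contractible. Pulling back, $U_k := (f')^{-1}(V_k)$ gives an open cover of $X$ by $n$ sets. For any connected component $C$ of $U_k$, the restriction $f'|_C$ factors through a contractible component of $V_k$, so the composition $\pi_1(C) \to \pi_1(K)$ is zero. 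Since $f$ is a classifying map and induces an isomorphism on $\pi_1$ of each connected component, $\pi_1(C) \to \pi_1(D)$ is trivial for every connected component $D$ of $X$, as required.

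For (ii) $\Rightarrow$ (i), I would use the nerve $N$ of the cover $\{U_1, \ldots, U_n\}$, a simplicial complex of dimension at most $n-1$. Fixing a subordinate partition of unity $\{\phi_i\}$, the barycentric-coordinate map $\Phi : X \to N$, $\Phi(x) := \sum_i \phi_i(x) v_i$, is continuous. The goal is to build a map $g : N \to K$ together with a homotopy $f \simeq g \circ \Phi$, so that $f$ factors up to homotopy through $N$ and hence deforms into an $(n-1)$-dimensional subcomplex of $K$ after cellular approximation. The map $g$ would be built skeleton-by-skeleton: on each vertex $v_i$, pick a point $p_i \in f(U_i)$; for each $k$-simplex $[v_{i_0}, \ldots, v_{i_k}]$ of $N$, choose a point $x \in U_{i_0} \cap \ldots \cap U_{i_k}$ and use the null-homotopies of $f|_{U_{i_j}}$ (which exist by inessentiality of $U_{i_j}$ combined with asphericity of $K$) to define $g$ on the simplex as a ``fan'' with apex $f(x)$. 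Gluing the local null-homotopies of $f|_{U_i}$ via the partition of unity then produces the desired global homotopy $f \simeq g \circ \Phi$.

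The main technical obstacle is the coherent construction of this skeleton-by-skeleton extension. Obstructions at cells of dimension $\ge 2$ in $K$ vanish by $\pi_k(K) = 0$, but one must verify that the 1-skeletal paths and the 2-dimensional fan fillings are mutually compatible; this is where the inessentiality hypothesis (applied to the $U_i$) supplies the null-homotopies needed to close up the fans. Some care is also required to handle non-connected $X$ or $U_i$ by arguing connected-component-by-connected-component, but this is straightforward because both definitions are phrased in terms of maps between connected components.
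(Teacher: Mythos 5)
The two directions in your write-up are labelled backwards: assuming $f$ deforms into $K^{(n-1)}$ and producing a cover proves $\neg(\mathrm{i})\Rightarrow\neg(\mathrm{ii})$, i.e.\ $(\mathrm{ii})\Rightarrow(\mathrm{i})$, and symmetrically for the other half. That is cosmetic, but worth fixing. Your first (Ganea-type) argument is essentially the paper's: cover $K^{(n-1)}$ by $n$ open sets whose components are contractible and pull back along $f$. The paper uses disjoint unions of open stars of barycentres of a fixed dimension in the barycentric subdivision; your ``thickenings of the $k$-cells'' amount to the same thing. That direction is fine.

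The other direction is where the two proofs genuinely diverge, and your nerve-based approach contains a gap that I do not think can be repaired. The map $\Phi:X\to N$ to the nerve $N$ of an inessential cover typically annihilates $\pi_1(X)$. Concretely, take $X$ any graph with nonabelian free $\pi_1$, covered by $U_0$ (a disjoint union of small open stars of the vertices) and $U_1$ (a disjoint union of small open intervals around the edge midpoints). Both are inessential, so $\neg(\mathrm{ii})$ holds with $n=2$. But the nerve $N$ is a single $1$-simplex, so $\pi_1(N)=0$ and $\Phi_*=0$. Any $g:N\to K(\pi_1(X),1)$ then makes $g\circ\Phi$ nullhomotopic, which cannot be homotopic to the classifying map $f$ whose $\pi_1$-effect is an isomorphism. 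So a factorisation $f\simeq g\circ\Phi$ simply does not exist: the obstruction is already on $\pi_1$, and no care with fan fillings or higher obstructions (which, as you say, vanish by asphericity) can fix the 1-skeletal problem. The essential issue is that the nerve forgets which connected component of each $U_i$ a point lies in, and that is precisely the information carrying $\pi_1(X)$.

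The paper's argument avoids the nerve entirely by working $G$-equivariantly on the universal cover: because each $U_i$ is inessential, $G$ acts freely on the set of components of $\widetilde U_i$, so one gets a $G$-map $f_i:\widetilde U_i\to G$ (to the group as a discrete $G$-set). A $G$-invariant partition of unity then glues these into a $G$-map $\widetilde f:\widetilde X\to G\ast\cdots\ast G$ ($n$ joins), and passing to quotients lands in $(G^{\ast n})/G$, the $(n-1)$-skeleton of the Milnor model of $K(G,1)$, with the correct effect on $\pi_1$ built in by equivariance. In effect the join $G^{\ast n}$ plays the role that you wanted the nerve to play, but it is a ``$G$-labelled nerve'' that remembers the component structure. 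If you want to keep a nerve-flavoured presentation, you should replace the simplicial nerve by this equivariant join; the plain barycentric-coordinate map into the nerve cannot work.
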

\begin{proof}
Set $G := \pi_1(X)$. If (i) fails for a certain deformed map $f$ then one covers the $(n-1)$-skeleton $K^{(n-1)}$ of $K(G, 1)$ with $n$ inessential open sets $K^{(n-1)} = V_1\cup\dots\cup V_n$. This may be done, for example, using the barycentric subdivision of $K^{(n-1)}$ and the proper coloring of its vertices in $n$ colors, then $V_i$ is a union of stars of vertices of color $i$ in the barycentric subdivision. Then the open sets $U_i = f^{-1}(V_i)\subseteq X$ show that (ii) fails as well.

In the opposite direction, we may work with every component of $X$ separately and assume $X$ is connected and has a universal cover. If $X=U_1\cup\dots\cup U_n$ with inessential $U_i$, then the universal covering space $\widetilde X$ equals $\widetilde U_1\cup\dots\cup \widetilde U_n$, so that $G$ acts freely on the components of every $\widetilde U_i$ (this is the meaning of ``inessential'' in terms of the universal cover). Hence there exist $G$-equivariant maps $f_i : \widetilde U_i\to G$ and using a $G$-invariant partition of unity $\rho_i$ subordinate to the open cover $\{\widetilde U_i\}$ we obtain
\[
\widetilde f(x) = \rho_1(x) f_1(x) * \dots * \rho_n(x) f_n(x).
\]
This $\widetilde f$ a $G$-equivariant map from $\widetilde X$ to the join $\underbrace{G*\dots *G}_n$. Passing to quotients, one obtains $f : X\to \underbrace{G*\dots *G}_n/G$, which may be viewed as a map from $X$ to the $(n-1)$-skeleton of $K(G, 1) = \underbrace{G*\dots*G}_\infty /G$, inducing an isomorphims of the fundamental groups. This shows that $X$ is not $n$-essential in terms of maps as well.
\end{proof}

\begin{lemma}
\label{lemma:topological-combinatorial}
If a connected $n$-dimensional complex is $n$-essential as a topological space then it is combinatorially $n$-essential in the sense of Definition \ref{definition:combinatorially-n-essential}.
\end{lemma}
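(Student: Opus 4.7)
My plan is to prove the contrapositive: assuming $X$ is not combinatorially $n$-essential, I would build an open cover of $X$ by $n$ topologically inessential sets, which by the equivalence in Lemma~\ref{lemma:essential} yields a deformation of the classifying map $X \to K(\pi_1(X),1)$ into the $(n-1)$-skeleton, showing that $X$ fails condition (i).

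Given a partition $V(X) = V_1 \sqcup \cdots \sqcup V_n$ into combinatorially inessential subsets, I would set
\[
U_i \;:=\; \bigcup_{v \in V_i} \mathrm{star}(v),
\]
the union of open stars of the vertices in $V_i$. Equivalently, $U_i$ consists of the points lying in the interior of a simplex of $X$ that meets $V_i$. Since every simplex has at least one vertex and every vertex lies in some $V_i$, the sets $U_1, \ldots, U_n$ form an open cover of $X$.

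The next step is a deformation retraction of $U_i$ onto $\langle V_i \rangle$ via the standard barycentric projection: a point $p = \sum_{v \in V(\sigma)} \lambda_v v$ in the interior of a simplex $\sigma$ is sent along the straight-line homotopy to
\[
\frac{\sum_{v \in V_i \cap V(\sigma)} \lambda_v\, v}{\sum_{v \in V_i \cap V(\sigma)} \lambda_v},
\]
which lies in the face of $\sigma$ spanned by $V_i \cap V(\sigma)$, a simplex of $\langle V_i \rangle$. The denominator is positive because $\sigma$ meets $V_i$ (which is what placed $p$ in $U_i$), the homotopy stays inside $\sigma$, and continuity across common faces is the usual routine check. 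A key observation is that if two vertices $v, v' \in V_i$ both belong to a simplex $\sigma$ of $X$, then $\{v, v'\}$ is already a simplex of $\langle V_i \rangle$, so $v$ and $v'$ lie in the same component of $\langle V_i\rangle$; consequently the retraction sends each connected component of $U_i$ into a \emph{single} component of $\langle V_i\rangle$ and realizes a homotopy equivalence between them.

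Combinatorial inessentiality of $V_i$ tells us that for every component $C$ of $\langle V_i\rangle$ the map $\pi_1(C) \to \pi_1(X)$ is trivial; transporting this along the retraction shows the same for every component of $U_i$, so each $U_i$ is topologically inessential in the sense of Lemma~\ref{lemma:essential}(ii), completing the argument. The only genuinely non-bookkeeping step is the observation in the preceding paragraph that components of $U_i$ correspond to single components of $\langle V_i\rangle$; without this point one would only obtain triviality of $\pi_1$ on each combinatorial component, which would not automatically descend to the open neighborhoods. The hypothesis that $X$ is $n$-dimensional, incidentally, plays no active role in the argument and seems included only to match the numerical parameter $n$ on the two sides of the implication.
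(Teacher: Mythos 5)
Your argument is correct and follows the same route as the paper's own proof: pass to the contrapositive, take the open cover by unions of open stars $U(V_i)$ of the inessential vertex parts, retract each onto $\langle V_i\rangle$, and invoke Lemma~\ref{lemma:essential}. The paper states the retraction step without detail; your version spells out the barycentric projection and the component-matching, which is a useful but routine elaboration of the same idea (and your side remark that the $n$-dimensionality hypothesis is unused in this direction is accurate).
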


Note that example in Remark \ref{remark:complete-graph} shows that the opposite direction of the lemma is false.

\begin{proof}[Proof of Lemma \ref{lemma:topological-combinatorial}]
A partition of a vertex set $V(X) = Y_1\sqcup\dots\sqcup Y_n$ of a triangulation produces an open cover $X = U(Y_1)\cup\dots\cup U(Y_n)$, where $U(Y_i)$ is the union of open stars of the vertices of $Y_i$, which retracts onto $\langle Y_i\rangle$ and is therefore inessential whenever $\langle Y_i\rangle$ is such. Now we apply the equivalence from Lemma \ref{lemma:essential}.
\end{proof}

\begin{remark}
Lemma \ref{lemma:topological-combinatorial} cannot be extended to more general cell complexes from simplicial complexes. Take a cube $Q_n=[-1,1]^n$. Its vertices are split into two subsets $O$ and $E$ depending on whether the number of $-1$'s is odd or even. No vertex from $O$ is connected by an edge of the cube to another vertex of $O$, and the same for $E$. When $n$ is even, the antipodal involution $\tau(x) = -x$ takes $E$ to $E$ and $O$ to $O$. The quotient $X=\partial Q_n/\tau$ is then a cell complex homeomorphic to $(n-1)$-essential $\mathbb RP^n$. The vertices of $X$ split into two types, $E/\tau$ and $O/\tau$, both inducing no edge and therefore being inessential.
\end{remark}

A weaker version in the opposite direction does hold.

\begin{lemma}
\label{lemma:combinatorial-topological}
If all sufficiently fine (iterated) barycentric subdivisions of a complex $X$ are combinatorially $n$-essential then the complex is $n$-essential as a topological space.\end{lemma}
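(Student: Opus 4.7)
The plan is to prove the contrapositive: assuming $X$ is \emph{not} $n$-essential as a topological space, I will show that every sufficiently fine iterated barycentric subdivision $X^{(k)}$ fails to be combinatorially $n$-essential, which contradicts the hypothesis.

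By Lemma~\ref{lemma:essential}(ii), non-essentiality of $X$ as a topological space gives an open cover $X = U_1\cup\dots\cup U_n$ by topologically inessential sets, meaning that for each component $C$ of $U_i$ and each component $D$ of $X$, the inclusion induces the trivial map $\pi_1(C)\to\pi_1(D)$. Assuming $X$ is a finite simplicial complex (hence a compact metric space), this cover admits a Lebesgue number $\epsilon>0$. Using the standard shrinking estimate for iterated barycentric subdivision, one can choose $k_0$ so large that for every $k\ge k_0$ and every vertex $v\in V(X^{(k)})$, the closed star $\overline{\mathrm{st}}(v)$ has diameter less than $\epsilon$, hence lies inside some $U_i$.

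Fixing such $k\ge k_0$, for each vertex $v$ of $X^{(k)}$ I would pick an index $i(v)\in\{1,\dots,n\}$ with $\overline{\mathrm{st}}(v)\subseteq U_{i(v)}$, and set $Y_i := \{v : i(v)=i\}$. The key verification is that each $Y_i$ is combinatorially inessential: any simplex $\sigma$ of $\langle Y_i\rangle$ contains some vertex $v\in Y_i$, so $\sigma\subseteq\overline{\mathrm{st}}(v)\subseteq U_i$, giving $\langle Y_i\rangle\subseteq U_i$. Each component $C$ of $\langle Y_i\rangle$ therefore sits inside some component $C'$ of $U_i$, and the map $\pi_1(C)\to\pi_1(D)$ factors through $\pi_1(C')\to\pi_1(D)$, which is trivial because $U_i$ is inessential. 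This exhibits a partition of $V(X^{(k)})$ into $n$ combinatorially inessential subsets for every $k\ge k_0$, contradicting the hypothesis.

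The only subtle point --- more technical than conceptual --- is ensuring that the diameters of \emph{closed} stars (not just open simplices) can be forced below $\epsilon$, so that the set-theoretic inclusion $\langle Y_i\rangle\subseteq U_i$ genuinely holds; this is the one place where the Lebesgue-number argument has to be applied to closed neighborhoods rather than to the open cover alone. Everything else is a direct dictionary between the covering formulation of Lemma~\ref{lemma:essential}(ii) and the vertex-partition formulation of Definition~\ref{definition:combinatorially-n-essential}.
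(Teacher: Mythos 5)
Your proposal is correct and follows essentially the same route as the paper's proof: both argue the contrapositive, invoke Lemma~\ref{lemma:essential}(ii) to obtain an open cover $X=U_1\cup\dots\cup U_n$ by inessential sets, pass to a sufficiently fine barycentric subdivision, assign each vertex to one of the $U_i$, and verify that the resulting induced subcomplexes land inside the corresponding $U_i$. The only difference is the mechanism for the ``sufficiently fine'' step: the paper shrinks the cover via a partition of unity to compact sets $F_i=\{\rho_i\ge 1/n\}$ and uses the positive distance from $F_i$ to $X\setminus U_i$, whereas you invoke the Lebesgue number of the cover directly and require closed stars (rather than single faces) to fall below that threshold --- a mild simplification that avoids the partition-of-unity bookkeeping. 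Your closed-star worry is harmless: if every simplex of the subdivision has diameter less than $\epsilon/2$ then every closed star lies in the ball $B(v,\epsilon/2)$ and hence has diameter below $\epsilon$, so the Lebesgue-number conclusion applies.
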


\begin{proof}
We use Lemma~\ref{lemma:essential} and assume that $X$ is not topologically $n$-essential. Then there is a cover $X=\bigcup_i U_i$ by $n$ or fewer inessential open subsets. It remains to show that some barycentric subdivision of $X$ allows splitting of vertices into $n$ or fewer non-essential vertex sets.

We consider a partition of unity $\sum_i \rho_i \equiv 1$ subordinate to $\{U_i\}$. The compact sets 
\[
F_i = \{x\in X\ |\ \rho_i(x)\ge 1/n\}
\] 
are contained in their respective open sets $U_i$, and $X=\bigcup_i F_i$. Let $\delta>0$ be the smallest over $i$ distance between the disjoint compact sets $F_i$ and $X\setminus U_i$ (assuming some metric on $X$).

Now we take an iterated barycentric subdivision $T$ of $X$ whose all faces have diameter strictly less than $\delta$. 
Partition its vertex set $V(T)$ into a disjoint union $\sqcup V_i=V(T)$ such that $V_i\subset F_i$ for all $i$.
For any $V_i$, the induced subcomplex $\langle V_i\rangle$ is then fully contained in $U_i$ by the choice of $\delta$ and $T$. The restriction of the universal covering map to $\langle V_i\rangle$ is then trivial, since it is trivial over $U_i$. Hence every $V_i$ is an inessential set of vertices.
\end{proof}

%

\bibliography{../Bib/karasev}

\begin{thebibliography}{10}

\bibitem{adiprasito2020}
K.~Adiprasito, S.~Avvakumov, and R.~Karasev.
\newblock A subexponential size {$\mathbb RP^n$}.
\newblock 2020.
\newblock \href{https://arxiv.org/abs/2009.02703}{arXiv:2009.02703}.

\bibitem{arnoux-marin1991}
P.~Arnoux and A.~Marin.
\newblock The {K}\"uhnel triangulation of the complex projective plane from the
  view point of complex crystallography.
\newblock {\em Memoirs of the Faculty of Science, Kyushu University. Series A,
  Mathematics}, 45(2):167--244, 1991.

\bibitem{babenko1993}
I.~Babenko.
\newblock Asymptotic invariants of smooth manifolds.
\newblock {\em Izvestiya: Mathematics}, 41(1):1--38, 1993.

\bibitem{balacheff2016}
F.~Balacheff and S.~Karam.
\newblock Length product of homologically independent loops for tori.
\newblock {\em Journal of Topology and Analysis}, 8(03):497--500, 2016.

\bibitem{balacheff2021}
F.~Balacheff, S.~Karam, and H.~Parlier.
\newblock The minimal length product over homology bases of manifolds.
\newblock {\em Mathematische Annalen}, pages 1--30, 2021.

\bibitem{borghini2019}
E.~Borghini.
\newblock A systolic inequality for 2-complexes of maximal cup-length and
  systolic area of groups.
\newblock 2019.
\newblock \href{https://arxiv.org/abs/1909.12720}{arXiv:1909.12720}.

\bibitem{cdv-h-dm2015}
E.~Colin~de Verdi\`ere, A.~Hubard, and A.~de~Mesmay.
\newblock Discrete systolic inequalities and decompositions of triangulated
  surfaces.
\newblock {\em Discrete and Computational Geometry}, 53:587--620, 2015.

\bibitem{govc2020-1}
D.~Govc, W.~Marzantowicz, and P.~Pave\v{s}i\'c.
\newblock Estimates of covering type and the number of vertices of minimal
  triangulations.
\newblock {\em Discrete and Computational Geometry}, 63:31--48, 2020.

\bibitem{govc2020-2}
D.~Govc, W.~Marzantowicz, and P.~Pave\v{s}i\'c.
\newblock How many simplices are needed to triangulate a {G}rassmannian?
\newblock 2020.
\newblock \href{https://arxiv.org/abs/2001.08292}{arXiv:2001.08292}.

\bibitem{gromov1983}
M.~Gromov.
\newblock Filling {R}iemannian manifolds.
\newblock {\em Journal of Differential Geometry}, 18:1--147, 1983.

\bibitem{guth2010}
L.~Guth.
\newblock Systolic inequalities and minimal hypersurfaces.
\newblock {\em Geometric and Functional Analysis}, 19:1688--1692, 2010.

\bibitem{hatcher2001}
A.~Hatcher.
\newblock {\em Algebraic Topology}.
\newblock Cambridge University Press, 2001.
\newblock
  \href{https://pi.math.cornell.edu/~hatcher/AT/AT.pdf}{pi.math.cornell.edu/\~hatcher/AT/AT.pdf}.

\bibitem{ivanov2011}
S.~V. Ivanov.
\newblock Filling minimality of {F}inslerian 2-discs.
\newblock {\em Proceedings of the Steklov Institute of Mathematics},
  273:176--190, 2011.

\bibitem{kalai1987}
G.~Kalai.
\newblock Rigidity and the lower bound theorem 1.
\newblock {\em Inventiones mathematicae}, 88:125--151, 1987.

\bibitem{kowalick2015}
R.~Kowalick, J.-F. Lafont, and B.~Minemyer.
\newblock Combinatorial systolic inequalities.
\newblock 2015.
\newblock \href{https://arxiv.org/abs/1506.07121}{arXiv:1506.07121}.

\bibitem{liokumovich2019}
Y.~Liokumovich, B.~Lishak, A.~Nabutovsky, and R.~Rotman.
\newblock Filling metric spaces.
\newblock 2019.
\newblock \href{https://arxiv.org/abs/1905.06522}{arXiv:1905.06522}.

\bibitem{nabutovsky2019}
A.~Nabutovsky.
\newblock Linear bounds for constants in {G}romov's systolic inequality and
  related results.
\newblock 2019.
\newblock \href{https://arxiv.org/abs/1909.12225}{arXiv:1909.12225}.

\bibitem{nakamura2013}
K.~Nakamura.
\newblock On isosystolic inequalities for {$T^{n}$}, {$\mathbb{R}P^{n}$}, and
  {$M^{3}$}.
\newblock 2013.
\newblock \href{https://arxiv.org/abs/1306.1617}{arXiv:1306.1617}.

\bibitem{papasoglu2019}
P.~Papasoglu.
\newblock Uryson width and volume.
\newblock {\em Geometric and Functional Analysis}, 30:574--587, 2020.

\bibitem{pu1952}
P.~M. Pu.
\newblock Some inequalities in certain nonorientable riemannian manifolds.
\newblock {\em Pacific Journal of Mathematics}, 2:55--71, 1952.

\bibitem{schoen1979}
R.~Schoen and S.-T. Yau.
\newblock Existence of incompressible minimal surfaces and the topology of
  three dimensional manifolds with non-negative scalar curvature.
\newblock {\em Annals of Mathematics}, 110(1):127--146, 1979.

\bibitem{stanley1987}
R.~Stanley.
\newblock On the number of faces of centrally-symmetric simplicial polytopes.
\newblock {\em Graphs and Combinatorics}, 3:55--66, 1987.

\end{thebibliography}
\bibliographystyle{abbrv}

\end{document}